\theoremstyle{plain}
\newtheorem{thm}{Theorem}[section]
\newtheorem{prop}[thm]{Proposition}
\theoremstyle{definition}
\newtheorem{lem}[thm]{Lemma}
\newtheorem{chunk}[thm]{}
\newtheorem{rmk}[thm]{Remark}
\numberwithin{equation}{section}
\newcommand{\qis}{\simeq}
\newcommand{\xra}[2][]{\xrightarrow[#1]{\;#2\;}}
\newcommand{\qra}{\xra{\qis}}
\newcommand{\gd}{\mbox{G-dim}\,}
\newcommand{\ZZ}{\mathbb{Z}}
\newcommand{\ac}{\mathcal{A}_C}
\newcommand{\bc}{\mathcal{B}_C}
\newcommand{\mpc}{\mathcal{P}_C}
\newcommand{\mic}{\mathcal{I}_C}
\DeclareMathOperator{\hh}{H}
\DeclareMathOperator{\im}{im}
\def\id{\operatorname{\mathsf{id}}}
\def\G-dim{\operatorname{\mathsf{G-dim}}}
\def\gkd{\operatorname{\mathcal{G}_{\it C}\mathsf{-dim}}}
\def\tate{\operatorname{\widehat{\Tor}}}
\def\gc{\operatorname{\mathsf{G}_{\it C}}}
\def\gmc{\operatorname{\mathcal{G}_{\it C}}}
\def\pd{\operatorname{\mathsf{pd}}}
\def\H{\operatorname{\mathsf{H}}}
\def\pcd{\operatorname{\mathcal{P}_{\it C}\mathsf{-dim}}}
\def\trk{\mathsf{Tr}_{C}}
\def\depth{\operatorname{\mathsf{depth}}}
\def\Ext{\operatorname{\mathsf{Ext}}}
\def\Hom{\operatorname{\mathsf{Hom}}}
\def\Tor{\operatorname{\mathsf{Tor}}}
\def\urltilda{\kern -.15em\lower .7ex\hbox{\~{}}\kern .04em}
\def\urldot{\kern -.10em.\kern -.10em}\def\urlhttp{http\kern -.10em\lower -.1ex
\hbox{:}\kern -.12em\lower 0ex\hbox{/}\kern -.18em\lower 0ex\hbox{/}}
\begin{document}

\title[A study of Tate homology via the approximation theory]{A study of Tate homology via the approximation theory \\with applications to the depth formula}

\author[O. Celikbas]{Olgur Celikbas}
\address{Olgur Celikbas: Department of Mathematics, West Virginia University, Morgantown, WV 26506-6310, U.S.A}
\email{olgur.celikbas@math.wvu.edu}

\author[L. Liang]{Li Liang$^{\ast}$}
\address{Li Liang: 1. Department of Mathematics, Shantou University, Shantou 515063, China \ \ \ 2. School of Mathematics and Physics, Lanzhou Jiaotong University, Lanzhou 730070, China}
\email{lliangnju@gmail.com}
\urladdr{https://sites.google.com/site/lliangnju}
\thanks{$^{\ast}$ Corresponding author.}

\author[A. Sadeghi]{Arash Sadeghi}
\address{Arash Sadeghi: School of Mathematics, Institute for Research in Fundamental Sciences, (IPM), P.O. Box:
19395-5746, Tehran, Iran}
\email{sadeghiarash61@gmail.com}

\author[T. Sharif]{Tirdad Sharif}
\address{Tirdad Sharif: School of Mathematics, Institute for Research in Fundamental Sciences, (IPM), P.O. Box:
	19395-5746, Tehran, Iran}
\email{s.tirdad@gmail.com}

\thanks{L. Liang was partly supported by NSF of China grant 11761045, and NSF of Gansu Province grant 21JR7RA297; A. Sadeghi and T. Sharif were partly supported by a grant from IPM}



\keywords{Depth formula, homological dimensions, absolute, relative and Tate Tor modules, semidualizing modules}

\subjclass[2010]{13D07; 13D05}

\begin{abstract}  In this paper we are concerned with absolute, relative and Tate Tor modules. In the first part of the paper we generalize a result of Avramov and Martsinkovsky by using the Auslander-Buchweitz approximation theory, and obtain a new exact sequence connecting absolute Tor modules with relative and Tate Tor modules. In the second part of the paper we consider a depth equality, called the depth formula, which has been initially introduced by Auslander and developed further by Huneke and Wiegand. As an application of our main result, we generalize a result of Yassemi and give a new sufficient condition implying the depth formula to hold for modules of finite Gorenstein and finite injective dimension.
\end{abstract}

\maketitle

\thispagestyle{empty}

\section{Introduction}
\noindent
Throughout $R$ denotes a commutative noetherian local ring. Moreover, all $R$-modules, unless otherwise stated, are assumed to be finitely generated.

Relative and Tate (co)homology theories for $R$-modules of finite Gorenstein dimension were introduced and studied by Avramov and Martsinkovsky in their beautiful paper \cite{AM}. In particular, an interesting connection between absolute, relative and Tate homology, and cohomology, modules was established in \cite{AM}; see also \cite[Theorem 1]{Ia}. We recall the homology version of the aforementioned result next:

\begin{thm} (Avramov and Martsinkovsky; see \cite[7.1 and 7.4]{AM}) \label{AM} Let $M$ and $N$ be $R$-modules. Assume $M$ has finite Gorenstein dimension $n$ for some positive integer $n$. Then there exists an exact sequence of the form:
$$\begin{CD}
0@>>>\tate_n^{R}(M,N)@>>>\Tor_n^R(M,N)@>>>\mathcal{G}\Tor_n^R(M,N)@>>>&  \\
\cdots@>>>\tate_1^{R}(M,N)@>>>\Tor_1^R(M,N)@>>>\mathcal{G}\Tor_1^R(M,N)@>>>0.
\end{CD}$$
\end{thm}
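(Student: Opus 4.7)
\medskip

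\noindent\textbf{Proof plan.} The plan is to derive the exact sequence as the long exact homology sequence of a short exact sequence of complexes that encodes the comparison between the three resolutions computing absolute, relative, and Tate Tor.

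Since $\gd M = n<\infty$, one can choose three compatible resolutions of $M$: a projective resolution $P\to M$; a proper Gorenstein projective resolution $G \to M$ with $G_i = 0$ for $i > n$ (which exists by the G-dimension hypothesis); and a complete resolution $T$ of $M$, i.e., a doubly infinite acyclic complex of finitely generated projectives equipped with a chain map $\varphi\colon T \to P$ that is an isomorphism in all degrees $\geq n$. Standard comparison arguments supply chain maps $P \to G$ and $T \to P$ that agree up to homotopy with the augmentations to $M$.

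The next step is to form the mapping cone $C$ of $\varphi\colon T \to P$ and consider the tautological short exact sequence of complexes
$$0 \longrightarrow P \longrightarrow C \longrightarrow \Sigma T \longrightarrow 0.$$
Because every term is projective, tensoring with $N$ preserves exactness, and the resulting long exact homology sequence reads
$$\cdots \to \H_{i+1}(C\otimes_R N) \to \tate_i^R(M,N) \to \Tor_i^R(M,N) \to \H_i(C\otimes_R N) \to \cdots,$$
using $\H_i(P \otimes_R N) = \Tor_i^R(M,N)$ and $\H_i(T \otimes_R N) = \tate_i^R(M,N)$ by definition.

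The main obstacle, and the heart of the proof, is to identify $\H_{i+1}(C\otimes_R N)$ with $\mathcal{G}\Tor_i^R(M,N)$. For this I would exploit the fact that $\varphi$ is an isomorphism in degrees $\geq n$, so the cone $C$ is contractible in those degrees and, up to quasi-isomorphism, reduces to a bounded complex representing the shift $\Sigma G$ of the proper Gorenstein projective resolution; tensoring with $N$ and taking homology then recovers the relative Tor computed from $G$. The boundary behavior of the sequence falls out of this identification: for $i > n$ one has $\H_{i+1}(C\otimes_R N)=0$, so the sequence begins with $0 \to \tate_n^R(M,N) \to \Tor_n^R(M,N)$, while at the bottom the augmentations $P_0 \twoheadrightarrow M$ and $G_0 \twoheadrightarrow M$ induce the same map after tensoring with $N$, which forces the sequence to terminate at $\mathcal{G}\Tor_1^R(M,N) \to 0$. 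The delicate point throughout is the careful tracking of the homotopies relating $P$, $G$ and $T$ needed to legitimize the identification of the cone term with relative Tor.
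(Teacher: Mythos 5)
Your proposal reconstructs the strategy of Avramov--Martsinkovsky's original argument rather than the route taken in this paper. The paper itself does not reprove Theorem~\ref{AM}: it cites \cite[7.1, 7.4]{AM} and then establishes the generalization Theorem~\ref{thmintro}, which recovers Theorem~\ref{AM} on setting $C=R$. As the introduction emphasizes, the paper's proof of Theorem~\ref{thmintro} deliberately avoids the comparison morphism $\tau\colon T\to P$ on which your argument rests, precisely because an analogue of $\tau$ is not available for a general semidualizing $C$; instead it works with a $\mathcal{G}_C$-approximation $0\to Y\to X\to M\to0$ (Remark~\ref{rm}), passes to the long exact sequences of absolute, relative, and Tate Tor attached to that short exact sequence, and splices them. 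Your cone-of-$\tau$ argument and the paper's approximation argument are therefore genuinely different; the cone approach is more direct when $\tau$ exists, while the approximation approach is more robust and is what generalizes.

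Two issues in your sketch need attention. First, there is an off-by-one error: from $0\to P\to \operatorname{Cone}(\varphi)\to\Sigma T\to0$ and the total acyclicity of $T$, one computes $\H_0(\operatorname{Cone}(\varphi))\cong M$ and $\H_i(\operatorname{Cone}(\varphi))=0$ for $i\neq0$; so the cone resolves $M$, not $\Sigma M$. Matching degrees with the displayed sequence, the identification you need is $\H_i(\operatorname{Cone}(\varphi)\otimes_RN)\cong\mathcal{G}\Tor_i^R(M,N)$, not with $\H_{i+1}$ as you wrote, and the cone should be compared with $G$, not $\Sigma G$.

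Second, and more seriously, the justification of that identification is incomplete. You invoke a quasi-isomorphism between the (truncated) cone and a proper Gorenstein projective resolution $G$, but a quasi-isomorphism between complexes of not-necessarily-projective modules need not survive $-\otimes_RN$; the cone consists of projectives while $G$ consists of totally reflexive modules, so the comparison cannot be made purely in the derived category. What is really needed is to show that a suitable truncation of the cone is itself a \emph{proper} Gorenstein projective resolution of $M$; the degree-zero term of the smart truncation is a syzygy of the complete resolution plus a projective, hence totally reflexive, and properness then follows from the vanishing $\Ext^i_R(A,Y)=0$ for $A$ totally reflexive and $Y$ of finite projective dimension (the $C=R$ case of Lemma~\ref{ll1}), which is exactly the ingredient your sketch leaves out. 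Until you supply this, the crucial step identifying the cone term with relative Tor is a gap, not a reduction.
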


Our first purpose in this paper is to improve Theorem \ref{AM}, and prove the following generalization:

\begin{thm} \label{thmintro} Let $M$ and $N$ be $R$-modules, and let $C$ be a semidualizing $R$-module. Assume $M$ has finite $\gmc$-dimension $n$ for some positive integer $n$. Assume further $N$ belongs to the Auslander class $\mathcal{A}_C$. Then there is an exact sequence of the form:
	$$\begin{CD}
	0@>>>\tate_n^{\mathcal{G}_C}(M,N)@>>>\Tor^R_n(M,N)@>>>\mathcal{G}_C\Tor^R_n(M,N)@>>>&  \\
	\cdots@>>>\tate_1^{\mathcal{G}_C}(M,N)@>>>\Tor^R_1(M,N)@>>>\mathcal{G}_C\Tor^R_1(M,N)@>>>0.
	\end{CD}$$
\end{thm}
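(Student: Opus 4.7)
The plan is to reduce the statement to Theorem \ref{AM} by means of the Foxby equivalence between the Auslander class $\ac$ and the Bass class $\bc$. Under the hypothesis that $M$ has finite $\gmc$-dimension, $M$ belongs to $\bc$ and the module $M':=\Hom_R(C,M)$ has finite Gorenstein dimension equal to $n$; in particular $\Ext_R^{\geq 1}(C,M)=0$. The hypothesis $N \in \ac$ likewise implies that $N':=C\otimes_R N$ lies in $\bc$ and that $\Tor^R_{\geq 1}(C,N)=0$. Together these vanishings yield quasi-isomorphisms $C\otimes^L_R M' \qis M$ and $C\otimes^L_R N \qis N'$ in the derived category.

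First I would apply Theorem \ref{AM} to the pair $(M',N')$, which is legitimate because $M'$ has finite Gorenstein dimension $n$; this yields the exact sequence
\[
0 \to \tate_n^R(M',N') \to \Tor^R_n(M',N') \to \mathcal{G}\Tor^R_n(M',N') \to \cdots \to \mathcal{G}\Tor^R_1(M',N') \to 0.
\]
The task then reduces to three canonical identifications. For the absolute part, the chain of quasi-isomorphisms $M\otimes^L_R N \qis (C\otimes^L_R M')\otimes^L_R N \qis M'\otimes^L_R(C\otimes^L_R N) \qis M'\otimes^L_R N'$ gives $\Tor^R_i(M,N)\cong\Tor^R_i(M',N')$. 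For the relative part I would take a $\gmc$-projective resolution $G_\bullet\to M$; since $\Hom_R(C,-)$ restricts to an equivalence between $\gmc$ and the class of Gorenstein projective modules, $\Hom_R(C,G_\bullet)\to M'$ is a $\mathcal{G}$-projective resolution of $M'$. A termwise computation, using $N\in\ac$ together with the Foxby counit $C\otimes_R\Hom_R(C,X)\cong X$ valid for $X\in\bc$, gives $G_\bullet\otimes_R N \cong \Hom_R(C,G_\bullet)\otimes_R N'$, and hence $\gmc\Tor^R_i(M,N) \cong \mathcal{G}\Tor^R_i(M',N')$. The same recipe applied to a complete $\gmc$-resolution of $M$ produces $\tate_i^{\gmc}(M,N)\cong\tate_i^R(M',N')$.

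Finally, the naturality of Foxby equivalence and of the constructions of relative and Tate homology ensures that the three identifications intertwine the connecting maps in the sequence, so the Avramov--Martsinkovsky sequence attached to $(M',N')$ transports to the desired sequence for $(M,N)$. The main obstacle I anticipate is the identification $G_\bullet\otimes_R N\cong\Hom_R(C,G_\bullet)\otimes_R N'$ at the level of complexes with differentials: while transparent when each $G_i = C\otimes_R P_i$ with $P_i$ projective, the general case requires patching together the termwise Foxby isomorphisms via naturality and repeated use of $\Tor^R_{\geq 1}(C,N)=0$. A companion check of the same flavor is that a complete $\gmc$-resolution of $M$ corresponds under $\Hom_R(C,-)$ to a complete $\mathcal{G}$-resolution of $M'$, which is where the asymmetric roles of the Auslander and Bass hypotheses must be kept straight.
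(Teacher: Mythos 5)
Your reduction through Foxby equivalence hinges on the claim that finiteness of $\gkd_R(M)$ forces $M\in\bc$, and this is false. For example, if $R$ is Cohen--Macaulay with dualizing module $\omega_R$ but not Gorenstein, then $R$ itself is totally $\omega_R$-reflexive, so $\gkd_R(R)=0$, yet $R\notin\mathcal{B}_{\omega_R}$: membership would require $\Ext^{\geq 1}_R(\omega_R,R)=0$, which characterizes the Gorenstein property (equivalently, by \ref{ABC}(iv) and Foxby equivalence, $\mathcal{B}_{\omega_R}$ consists of modules of finite Gorenstein injective dimension, and $\operatorname{Gid}_R(R)<\infty$ forces $R$ Gorenstein). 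Without $M\in\bc$, the module $M'=\Hom_R(C,M)$ need not have finite Gorenstein dimension, $\Ext^{\geq 1}_R(C,M)$ need not vanish, and the counit $C\otimes_R\Hom_R(C,M)\to M$ need not be an isomorphism, so the identification $\Tor^R_i(M,N)\cong\Tor^R_i(M',N')$ and the subsequent matching of relative and Tate homologies all collapse. Indeed, the paper points out in the introduction that the cohomological analogue of Theorem \ref{AM} proved by Sather-Wagstaff, Sharif and White \cite{SaSW} \emph{does} require $M\in\bc$; the content of Theorem \ref{thmintro} is precisely that the hypothesis is moved onto $N$ (namely $N\in\ac$), so a naive transport through Foxby equivalence cannot reach it.

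The paper's proof is of a genuinely different nature. Instead of transporting the classical sequence, it takes a $\gmc$-approximation $0\to Y\to X\to M\to 0$ with $X$ totally $C$-reflexive and $\pcd_R(Y)<\infty$. From this it extracts: the identifications $\gmc\Tor^R_i(M,N)\cong\Tor^R_{i-1}(Y,N)$ for $i\geq 2$ and the low-degree exact sequence (\ref{thmintro}.2); the vanishing $\tate^{\gmc}_{*}(Y,N)=0$ (Lemma \ref{l4}), hence $\tate^{\gmc}_i(X,N)\cong\tate^{\gmc}_i(M,N)$; and the long exact sequence of ordinary $\Tor$ attached to the approximation. Splicing these produces the desired sequence. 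This route also sidesteps the other structural difficulty, namely that there is no obvious morphism from the complete $\mathcal{P}\mathcal{P}_C$-resolution to a projective resolution of $M$ playing the role of the comparison map $\tau$ in the classical case; the approximation sequence supplies the needed maps instead.
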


We use the Auslander-Buchweitz approximation theory to prove the above result. The classical Tate Tor modules $\tate_{*}^{R}(M,N)$ are computed by using a complete resolution of $M$, i.e., by using a diagram $T \xra{\tau} P \qra M $, where $P \qra M$ is a projective resolution of $M$, $T$ is a totally acyclic complex of free $R$-modules, and $\tau_i$ is an isomorphism for all $i \gg 0$. Therefore, $\tau$ induces a well-defined map on the Tor modules $\tate_n^{R}(M,N)\to \Tor_n^R(M,N)$ for each $i=1,\ldots, n$. However, we are not aware of a morphism, similar to $\tau$, which induces a map $\tate_i^{\mathcal{G}_C}(M,N)\to \Tor^R_i(M,N)$ for each $i=1,\ldots, n$, so a completely new approach is needed in proving Theorem \ref{thmintro}; we prove this result via maps induced on Tor modules via a $\gmc$-approximation of $M$ introduced by Auslander and Buchweitz \cite{AuBu}.

Let us also draw attention to an important point concerning Theorem \ref{thmintro}: we prove in Proposition \ref{ppcc} that Tate Tor modules $\tate_{*}^{\mathcal{G}_C}(M,N)$ with respect to a semidualizing module $C$ are well-defined as long as $N$ belongs to the Auslander class $\mathcal{A}_C$; see also \ref{ABC}.

We recall the definitions of relative and Tate Tor modules, $\mathcal{G}_C\Tor^R_{*}(M,N)$ and $\tate_{*}^{\mathcal{G}_C}(M,N)$, and record various preliminary results in Section 2. Here let us point out that
Theorem \ref{thmintro} is a generalization of Theorem \ref{AM}; more precisely Theorem \ref{AM} follows immediately from Theorem \ref{thmintro} when the semidualizing module $C$ equals $R$.

\begin{center}
  $\ast \ \ \ast \ \ \ast$
\end{center}

\noindent
Auslander \cite{A} proved that, if $M$ and $N$ be $R$-modules with $\pd_R(M)<\infty$ and $q=\sup\{i\mid\Tor_i^R(M,N)\neq0\}$, then the depth equality $\depth_R(M)+\depth_R(N)=\depth R+\depth_R(\Tor_q^R(M,N))-q$ holds, provided that either $q=0$ or $\depth_R(\Tor_q^R(M,N))\leq1$. Huneke and Wiegand \cite{HW} studied this depth equality over complete intersection rings for the case where $q=0$, and dubbed it ``the depth formula." Following the work of Huneke and Wiegand, the depth formula has been a subject of intense study and various conditions implying the depth formula have been obtained by researchers including Araya and Yoshino \cite{AY}, Foxy \cite{F2} and Iyengar \cite{Iy}; see also \cite{CI, CJ, SSS}.

A result of Yassemi \cite[2.11]{Y} concerning the depth formula, implies that, if $ M$ and $N$ are $R$-modules such that $\G-dim_R(M)<\infty$, $\id_R(N)<\infty$, and $\Tor_i^R(M,N)=0$ for all $i\geq 1$, then the depth formula holds, i.e., one has $\depth_R(M)+\depth_R(N)=\depth R+\depth_R(M\otimes_RN)$. One of the reasons why Yassemi's result is interesting for us is that it uses the finiteness of the injective dimension instead of the projective dimension to establish the depth formula. In section 4, we make use of Theorem \ref{thmintro} and generalize Yassemi's result as follows:

\begin{thm}\label{thmintro2} Let $M$ and $N$ be $R$-modules, and let $C$ be a semidualizing $R$-module. Assume $\gkd_R(M)<\infty$ and $\mathcal{I}_{C}$-$\id_R(N)<\infty$. Set $q=\sup\{i\mid\Tor_i^R(M,N)\neq0\}$. If $q=0$ or $\depth_R(\Tor_q^R(M,N))\leq1$, then it follows that $$\depth_R(M)+\depth_R(N)=\depth R+\depth_R(\Tor_q^R(M,N))-q.$$
\end{thm}
We should note that, under the setup of Theorem \ref{thmintro2}, we have $\Tor_i^R(M,N) \cong \mathcal{G}_C\Tor^R_i(M,N)$ for all $i\geq 0$ so that $\Tor_i^R(M,N)=0$ for all $j\gg 0$, i.e., $q<\infty$; see Theorem \ref{thmintro}, \ref{HD}(iii), and Proposition \ref{llll}. We should also note that Yassemi's result discussed above is an immediate consequence of Theorem \ref{thmintro2} when $q=0$ and $C=R$; see section 2 for the definitions and notations.

Section 4 is devoted to the proof of Theorem \ref{thmintro2}: as the proof of the theorem requires substantial preparation, we first establish several preliminary results, and then give a proof of the theorem at the end of the section; see the paragraph following the proof of Proposition  \ref{llll}. One of these preliminary results we prove is Theorem \ref{tt}: this theorem plays an important role in the proof of Theorem \ref{thmintro2}, and it is a vast generalization of Auslander's classical depth formula result stated above.

In the appendix, we provide motivation for our work concerning section 4 on why we study new conditions that imply the depth formula. More precisely, to faciliate the discussion, we record some remarks and questions, and recall several related applications of Auslander's depth formula from the literature.

\section{Preliminary definitions and properties}
\noindent
In this section we collect various definitions and results needed for our arguments, and provide some background on relative homological algebra. For the unexplained standard terminology, we refer to the reader to \cite{AB, BH}.

\begin{chunk} (\textbf{Semidualizing modules}; see \cite{F, G, V}) \label{sd}
An $R$-module $C$ is called a \emph{semidualizing} module provided that the homothety morphism $R\to\Hom_R(C,C)$ is bijective, and $\Ext^i_R(C,C)=0$ for all $i\geq 1$.

Semidualizing modules were initially studied by Foxby \cite{F}, Vasconcelos \cite{V} and Golod \cite{G}. A dualizing module, if it exists, is an example of a semidualizing module.
\pushQED{\qed}
\qedhere
\popQED	
\end{chunk}

In this paper $C$ denotes a semidualizing module.

\begin{chunk} (\textbf{Auslander transpose}; see \cite{AB, F}) \label{at} Let
$P_1\overset{f}{\rightarrow}P_0\rightarrow M\rightarrow 0$ be a projective presentation of $M$.
The transpose of $M$ with respect to $C$, denoted by $\trk M$, is given by the following exact sequence
\begin{equation}\tag{\ref{at}.1}
0\rightarrow M^{\triangledown}\rightarrow
P_0^{\triangledown}\overset{f^{\triangledown}}{\longrightarrow} P_1^{\triangledown}\rightarrow \trk
M\rightarrow 0,
\end{equation}
where $(-)^{\triangledown}=\Hom_R(-,C)$.
\pushQED{\qed}
\qedhere
\popQED	
\end{chunk}

\begin{chunk}(\textbf{Reflexivity} \cite{F,G}) \label{cr} The module $M$ is said to be \emph{$C$-reflexive} if the canonical map $M\rightarrow M^{\triangledown\triangledown}$ is bijective. Also, $M$ is said to be  \emph{totally $C$-reflexive} if $M$ is $C$-reflexive and $\Ext^i_R(M,C)=0=\Ext^i_R(M^{\triangledown},C)$ for all $i\geq 1$. \pushQED{\qed}
\qedhere
\popQED	
\end{chunk}

We recall the definitions and some basic properties of Auslander and Bass classes:

\begin{chunk} (\textbf{Auslander and Bass classes} \cite{F}) \label{ABC}
\begin{enumerate}[\rm(i)]
\item The {\em Auslander class with respect to} $C$, denoted by $\ac$, consists of all $R$-modules $X$ satisfying:
\begin{enumerate}[\rm(a)]
\item  The natural map $\mu_{X}:X\longrightarrow\Hom_R(C,X\otimes_RC)$ is bijective.
\item $\Tor_i^R(X,C)=0=\Ext^i_R(C,X\otimes_RC)$ for all $i\geq 1$.
\end{enumerate}
\item The {\em Bass class with respect to} $C$, denoted by $\bc$, consists of all $R$-modules $Y$ satisfying:
\begin{enumerate}[\rm(a)]
\item The natural evaluation map $\nu^{Y}: C\otimes_R\Hom_R(C,Y) \longrightarrow Y$ is bijective.
\item $\Tor_i^R(\Hom_R(C,Y),C)=0=\Ext^i_R(C,Y)$ for all $i\geq 1$.
\end{enumerate}
\item If any two modules in a short exact sequence of $R$-modules belong to $\ac$ (respectively, to $\bc$), then does the third one; see \cite[1.3]{F}.
Hence, each $R$-module of finite projective dimension belongs to the Auslander class $\ac$. Moreover, the class $\bc$ contains all modules of finite injective dimension.
\item Assume $R$ is Cohen-Macaulay with a dualizing module $\omega_R$. Then it follows that $M\in\mathcal{A}_{\omega_R}$ if and only if $\gd_R(M)<\infty$; see \cite[Theorem 1]{F1}.
\item Assume $M\in\bc$. Then it follows from the definition that $\Ext^i_R(C,M)=0$ for all $i\geq 1$. Hence \cite[4.1]{AY} shows that $\depth_R(M)=\depth_R(\Hom_R(C,M))$.
\item Assume $M\in\ac$. Then it follows that $\depth_R(M)=\depth_R(M\otimes_RC)$; see  \cite[2.11]{DS}.
\item If $N\in \ac$ is a module, then \cite[5.1]{Sa} extends the result of Auslander and Bridger \cite[2.6]{AB} and yields an exact sequence:
$0\rightarrow\Ext^1_R(\trk M,N\otimes_RC)\rightarrow M\otimes_RN\rightarrow\Hom_R(M^\triangledown,N\otimes_RC)\rightarrow\Ext^2_R(\trk M,N\otimes_RC)\rightarrow0$.
\pushQED{\qed}
\qedhere
\popQED	
\end{enumerate}
\end{chunk}

In order to recall the definitions of Tors with respect to a semidualizing module, we first set up some conventions:

\begin{chunk}\label{cx} In our context, an $R$-complex $X$ is a complex of $R$-modules that is indexed homologically.  For an integer $n$, we define
\begin{enumerate}[\rm(i)]
\item the $n$-{\em fold shift} of $X$ is the complex $\Sigma^n X$ with $(\Sigma^n X)_i = X_{i-n}$, and
$\partial_{i}^{\Sigma^n X} =(-1)^n\partial_{i-n}^X$.
\item the {\em hard truncation} complex $X_{\geq n}$ of $X$ is defined as follows: $(X_{\geq n})_i=0$ for $i<n$, $(X_{\geq n})_i=X_i$ for $i\geq n$, and $\partial^{X_{\geq n}}_{i}=\partial^X_i$ for $i>n$.
\pushQED{\qed}
\qedhere
\popQED	
\end{enumerate}
\end{chunk}

\begin{chunk} (\cite{HJ}) \label{CP} The category $\mpc(R)$ of \emph{$C$-projective} modules is defined as $\{P\otimes_RC : P \text{ is a projective } R \text{-module}\}$. Note that, since $R$ is local and all $R$-modules are assumed to be finitely generated, an $R$-module $M$ is $C$-projective if and only if $M$ is isomorphic to a finite direct sum copies of the semidualizing module $C$.

Similarly, one defines the category of \emph{$C$-injective} modules as $\mic(R)=\{\Hom_R(C,I) : I \text{ is an injective } R \text{-module}\}$. Note that a $C$-injective $R$-module is not necessarily finitely generated. In fact, injective $R$-modules are not necessarily finitely generated as we do not assume the ring $R$ to be Cohen-Macaulay; see \cite[9.6.2 and 9.6.4]{BH}.
\pushQED{\qed}
\qedhere
\popQED	
\end{chunk}

\begin{chunk} (\textbf{Sequences and Resolutions}) \label{PCR}
\begin{enumerate}[\rm(i)]
\item A sequence $\eta$ of $R$-module homomorphisms is called \emph{$\gmc$-proper} if the
induced sequence $\Hom_R(A,\eta)$ is exact for every totally $C$-reflexive $R$-module $A$; see \ref{cr}.
\item A \emph{$\gmc$-resolution} of $M$ is a right acyclic complex $X$ of totally $C$-reflexive $R$-modules such that $\H_0(X) \cong M$.
\item A \emph{$\gmc$-proper resolution} of $M$ is a $\gmc$-resolution $X$ of $M$ such that the augmented resolution $X^+$ is a $\gmc$-proper sequence, i.e.,
the complex
$$\Hom_R(A, X^{+})= \cdots \to \Hom_R(A,X_1) \to \Hom_R(A, X_0) \to \Hom_R(A, M) \to 0$$
is acyclic for each totally $C$-reflexive $R$-module $A$; see \ref{cr}.
\item A \emph{$\mathcal{P}_C$-resolution} of $M$ is a right acyclic complex $X$ of $C$-projective $R$-modules such that $\H_0(X) \cong M$. One defines a \emph{$\mathcal{P}_C$-proper} resolution as in part (iii). It follows from \cite[Lemma 3.1(c)]{STSY} that every $R$-module has a $\mathcal{P}_C$-proper resolution.
\item An \emph{$\mathcal{I}_C$-resolution} of $M$ is a left acyclic complex $X$ of $C$-injective $R$-modules such that $\H^{0}(X) \cong M$.
\item A \emph{complete $\mathcal{P}\mpc$-resolution} is an acyclic complex $X$ of $R$-modules such that
\begin{enumerate}[\rm(a)]
\item $X_i$ is projective for each $i\geq 0$, and $X_i$ is $C$-projective for each $i\leq -1$.
\item $\Hom_R(X,A)$ is an acyclic complex for each $C$-projective $R$-module $A$. \pushQED{\qed}
\qedhere
\popQED	
\end{enumerate}
\end{enumerate}
\end{chunk}

The classical Gorenstein dimension \cite{AB} was extended to $\gmc$-dimension by Foxby in \cite{F} and Golod in \cite{G}. We recall its definition along with some other homological dimensions with respect to the semidualizing module $C$.

\begin{chunk} (\textbf{Homological dimensions}) \label{HD}
\begin{enumerate}[\rm(i)]
\item The $\gmc$-dimension of $M$, denoted by $\gkd_R(M)$, is said to be finite if $M$ has a $\gmc$-resolution of finite length. We set $\gkd_R(0)=-\infty$. If $M$ is not zero, it follows $\gkd_R(M)=0$ if and only if $M$ is totally $C$-reflexive. If $\gkd_R(M)=n<\infty$, then $M$ has a $\gmc$-proper resolution $X$, even one such that $X_i$ is $C$-projective for each $i=1, \ldots, n$ and $X_i=0$ for each $i\geq n+1$; see \cite[2.3]{ATY}.
\item The $\mpc$-dimension of $M$, denoted by $\pcd_R(M)$, is said to be finite if $M$ has a $\mathcal{P}_C$-resolution of finite length; see \cite[2.10(a)]{TW}. Note that, if $\pcd_R(M)<\infty$, then $M\in\bc$; see \cite[2.9(a)]{TW}. Moreover, $\pcd_R(M)=\pd_R(\Hom_R(C,M))$; see \cite[2.11]{TW}.
\item The $\mathcal{I}_C$-injective dimension of $M$, denoted by $\mathcal{I}_{C}$-$\id_R(M)$, is said to be finite if $M$ has an $\mathcal{I}_C$-resolution of finite length; see \cite[2.10(b)]{TW}. Note that, if $\mathcal{I}_{C}$-$\id_R(M)<\infty$, then $M\in\mathcal{A}_C$; see \cite[2.9(b)]{TW}. Moreover, $\mathcal{I}_{C}$-$\id_R(M)=\id_R(M\otimes_RC)$ \cite[2.11]{TW}.
\pushQED{\qed}
\qedhere
\popQED	
\end{enumerate}
\end{chunk}

In the following, we record some facts about $\gmc$-dimension:

\begin{chunk} \label{AA} The semidualizing module $C$ is dualizing if and only if $\gkd_R(Y)<\infty$ for all $R$-modules $Y$; see \cite[1.3]{Ge}. Furthermore, by \cite{G}, it follows that:
\begin{enumerate}[\rm(i)]
\item $\gkd_R(M)=0$ if and only if $\gkd_R(\trk M)=0$.
\item If $\gkd_R(M)<\infty$, then $\gkd_R(M)=\depth(R)-\depth_R(M)$.
\item If $\gkd_R(M)<\infty$, then $\gkd_R(M)=\sup\{i\geq0\mid\Ext^i_R(M,C)\neq0\}$.
\item $\gkd_R(M)=0$ if and only if $\Ext^i_R(M,C)=0=\Ext^i_R(\trk M,C)$ for all $i\geq 1$.
\pushQED{\qed}
\qedhere
\popQED	
\end{enumerate}
\end{chunk}

We are now ready to record the definitions of several types of Tor.

\begin{chunk} (\textbf{$\gmc$-Relative homology}; see \cite[2.4]{HH}) \label{RTS}
Let $M$ be an $R$-module of finite $\gmc$-dimension (in this case $M$ has a $\gmc$-proper resolution; see \ref{HD}(i)). For an $R$-module $N$ and an integer $i$, we define the \emph{$\gmc$-relative Tor} as $$\mathcal{G}_C\Tor^R_i(M,N)=\hh_i(X\otimes_R N),$$
where $X$ is a $\gmc$-proper resolution of $M$.

It is known that this construction is well-defined, and also functorial in $M$ and $N$. Notice, it follows from the definition that $\mathcal{G}_C\Tor^R_0(M,N)=M\otimes_R N$. \pushQED{\qed}
\qedhere
\popQED	
\end{chunk}

\begin{chunk} (\textbf{$\mathcal{P}_C$-Relative homology}; see \cite[3.10 and 4.3]{STSY}) \label{TPC} Let $M$ be an $R$-module. For an $R$-module $N$ and an integer $i$, we define the \emph{$\mathcal{P}_C$-relative Tor} as
$$\Tor_i^{\mathcal{P}_C}(M,N)=\hh_i(X \otimes_R N),$$
where $X$ is a $\mathcal{P}_C$-proper resolution of $M$; see \ref{PCR}(iv). We note that:
\begin{enumerate}[\rm(i)]
\item There are natural isomorphisms $\Tor_i^{\mathcal{P}_C}(M,N)\cong\Tor_i^R(\Hom_R(C,M),N\otimes_RC)$ for each $i\geq 0$.
\item If $M\in\bc$ and $N\in\ac$, it follows that $\Tor_i^{\mathcal{P}_C}(M,N)\cong\Tor_i^R(M,N) $ for each $i\geq 0$.
\pushQED{\qed}
\qedhere
\popQED	
\end{enumerate}
\end{chunk}

\begin{chunk} (\textbf{Tate Tor with respect to a semidualizing module}) \label{TTS}   Assume $\gkd_R(M)<\infty$.

Let $\gkd_R(M)=n$ and let ${\bf P}\to M$ be a projective resolution of $M$. Then the $nth$ syzygy module $\Omega^nM$ of $M$ is totally $C$-reflexive. Let ${\bf Q}\to(\Omega^nM)^{\triangledown}\to0$ be a projective resolution of $(\Omega^nM)^{\triangledown}$ and consider the complex  dualized with respect to $C$, i.e., $0\to(\Omega^nM)^{\triangledown\triangledown}\to {\bf Q^{\triangledown}}$.
Note that, by the defining properties of totally $C$-reflexive modules, this complex is exact and $\Omega^nM\cong(\Omega^nM)^{\triangledown\triangledown}$.

Hence there is an acyclic complex $0 \to \Omega^nM \to T_{n-1} \to T_{n-2} \to \cdots$ with the following properties:
\begin{enumerate}[\rm(1)]
\item Each $T_i$ is $C$-projective.
\item For each $C$-projective $R$-module $X$, the complex
$\cdots  \to \Hom_R(T_{n-1},X) \to \Hom_R(\Omega^nM,X) \to 0$ is acyclic.
\end{enumerate}
Splicing the complex $0 \to \Omega^nM \to T_{n-1} \to T_{n-2} \to \cdots$ with the projective resolution of $M$, i.e., with the acyclic complex $\cdots \to P_{n+1} \to P_{n} \to \Omega^nM \to 0$, we obtain an acyclic complex of $R$-modules
$${\bf T}: \cdots\longrightarrow T_{n+1}\overset{\partial_{n+1}}{\longrightarrow} T_{n}\overset{\partial_{n}}{\longrightarrow} T_{n-1}\overset{\partial_{n-1}}{\longrightarrow} T_{n-2}\longrightarrow\cdots$$
in which $\im \partial_n =\Omega^n(M)$, $T_i$ is projective for each $i\geq n$, and $T_i$ is $C$-projective for each $i<n$. Then it follows that $\mathsf{\Sigma}^{-n}{\bf T}$ is a complete $\mathcal{P}\mathcal{P}_C$-resolution, and ${\bf T}_{\geq n}={\bf P}_{\geq n}$.

For an $R$-module $N$ that belongs to $\ac$ and for an integer $i$, we define the \emph{Tate Tor} as:
\begin{equation}\tag{\ref{TTS}.1}
\tate_i^{\mathcal{G}_C}(M,N)=\hh_i({\bf T}\otimes_RN).
\end{equation}
Note that, by construction, there are isomorphisms
\begin{equation}\tag{\ref{TTS}.2}
\tate_i^{\mathcal{G}_C}(M,N)\cong\Tor_i^R (M,N) \text{ for all } i>\gkd_R(M). \pushQED{\qed}
\qedhere
\popQED	
\end{equation}
\end{chunk}

We finish this section by proving in Proposition \ref{ppcc} that the Tor modules $\tate_{*}^{\mathcal{G}_C}(M,N)$, defined as in \ref{TTS}, are independent of the choice of the acyclic complex $T$. It is worth noting, for this property, we need to assume $N\in\ac$. We proceed and record a result from \cite{TW} which will be used later. In passing, we recall that $C$-injective $R$-modules are not necessarily finitely generated; see \ref{CP}.

\begin{chunk} (\cite[2.3(b)]{TW}) \label{noted} Assume $M\in\ac$. Then there exists an exact sequence $0\to M\to X\to Y\to 0$,  where $X$ is an $C$-injective module and $Y\in\ac$. \pushQED{\qed}
\qedhere
\popQED	
\end{chunk}

\begin{lem}\label{ppc} $T\otimes_RJ$ is an acyclic complex if $T$ is a complete $\mathcal{P}\mathcal{P}_C$-resolution and $J$ is an $C$-injective $R$-module.
\end{lem}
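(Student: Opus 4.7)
The plan is to reduce the acyclicity of $T\otimes_R J$ to the defining acyclicity property of $T$ against $C$-projectives. Since $J$ is $C$-injective we may write $J=\Hom_R(C,I)$ for some injective $R$-module $I$. I will construct a natural isomorphism of $R$-complexes
$$T\otimes_R\Hom_R(C,I)\;\cong\;\Hom_R\bigl(\Hom_R(T,C),I\bigr)$$
degree by degree, and then observe that the right-hand side is acyclic: indeed $C=R\otimes_R C$ is itself $C$-projective, so by \ref{PCR}(vi)(b) the complex $\Hom_R(T,C)$ is acyclic, and $\Hom_R(-,I)$ is exact since $I$ is injective.

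To set up the isomorphism I use tensor evaluation in two halves. For $i\geq 0$ the module $T_i$ is finitely generated projective, so the standard tensor-evaluation map
$$T_i\otimes_R\Hom_R(C,I)\longrightarrow\Hom_R\bigl(\Hom_R(T_i,C),I\bigr)$$
is bijective. For $i\leq -1$, write $T_i=Q_i\otimes_R C$ with $Q_i$ finitely generated projective. Injective modules belong to $\bc$ by \ref{ABC}(iii), so the evaluation map $C\otimes_R\Hom_R(C,I)\to I$ is bijective, yielding $T_i\otimes_R\Hom_R(C,I)\cong Q_i\otimes_R I$. On the other hand, using the homothety isomorphism $R\cong\Hom_R(C,C)$ one gets $\Hom_R(T_i,C)\cong\Hom_R(Q_i,R)$, and then $\Hom_R(\Hom_R(Q_i,R),I)\cong Q_i\otimes_R I$ by finite projectivity of $Q_i$. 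Naturality in $T_i$ guarantees that the degreewise isomorphisms assemble into a chain isomorphism.

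The only real, and rather mild, obstacle is treating the two halves of $T$ uniformly: in nonnegative degrees the isomorphism is the classical tensor-evaluation identity for finitely generated projectives, while in negative degrees it is the Bass-class membership of $I$ (which collapses $C\otimes_R\Hom_R(C,I)$ to $I$) that makes it work. Once both degreewise isomorphisms are in place, the conclusion is immediate from applying the definition of complete $\mathcal{P}\mathcal{P}_C$-resolution to the particular $C$-projective module $C$ itself.
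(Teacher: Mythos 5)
Your argument is correct and takes a genuinely different route from the paper. The paper's proof is short but indirect: it passes to the character module $J^+=\Hom_{\mathbb{Z}}(J,\mathbb{Q}/\mathbb{Z})$, invokes \cite[4.1]{SSW} to conclude $J^+$ is $C$-flat (hence has a bounded $\mathcal{P}_C$-resolution), applies \cite[2.7]{W} to get acyclicity of $\Hom_R(T,J^+)\cong(T\otimes_RJ)^+$, and deduces acyclicity of $T\otimes_RJ$ by faithful exactness of the character functor. Your proof is more self-contained: you observe that $C$ is itself $C$-projective (taking $P=R$ in the definition), so the defining property \ref{PCR}(vi)(b) of a complete $\mathcal{P}\mathcal{P}_C$-resolution already gives acyclicity of $\Hom_R(T,C)$, and then you transfer this along the chain isomorphism $T\otimes_R\Hom_R(C,I)\cong\Hom_R(\Hom_R(T,C),I)$, finishing by exactness of $\Hom_R(-,I)$ for $I$ injective. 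This avoids both auxiliary citations and the detour through $C$-flatness.

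One small suggestion to tighten the write-up: rather than constructing the degreewise isomorphisms by two different recipes (tensor evaluation for $i\geq 0$, Bass-class collapsing plus $\Hom_R(\Hom_R(Q_i,R),I)\cong Q_i\otimes_R I$ for $i\leq -1$) and then appealing to naturality, simply take the single Hom-evaluation natural transformation $\theta_{X}\colon X\otimes_R\Hom_R(C,I)\to\Hom_R(\Hom_R(X,C),I)$, $x\otimes f\mapsto\bigl(g\mapsto f(g(x))\bigr)$, applied degreewise to $T$; it is automatically a chain map. For $i\geq 0$ it is bijective because $T_i$ is finitely generated projective. For $i\leq-1$, after reducing to $T_i=C$ by additivity, one checks directly that under the identifications $\Hom_R(\Hom_R(C,C),I)\cong\Hom_R(R,I)\cong I$ the map $\theta_{C}$ becomes precisely the Bass-class evaluation $\nu^I\colon C\otimes_R\Hom_R(C,I)\to I$, which is bijective since injective modules lie in $\bc$ by \ref{ABC}(iii). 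This phrasing makes the compatibility with differentials transparent rather than something that has to be asserted.
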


\begin{proof} Note, as $J$ is $C$-injective, it follows from \cite[4.1]{SSW} that $J^+=\Hom_{\mathbb{Z}}(J,\mathbb{Q}/\mathbb{Z})$ is $C$-flat. Therefore $J^+$ admits a bounded $\mathcal{P}_C$-resolution  because every flat module has finite projective dimension; see \cite[Prop. 6]{Jen}. Hence, by \cite[2.7]{W}, we see that the complex $\Hom_R(T,J^+)$ is acyclic; this implies $T\otimes_RJ$ is acyclic.
\end{proof}

\begin{prop} \label{ppcc}Let $M$ and $N$ be $R$-modules such that  $\gkd_R(M)=n<\infty$ and $N\in \ac$. Let $(T,P)$ and $(T',P')$ be a pair of complexes defined as in \ref{TTS}. Then, for each $i\in\ZZ$, there is an isomorphism $\hh_{i}(T\otimes_RN) \cong \hh_{i}(T'\otimes_RN)$. Therefore, $\tate_{*}^{\mathcal{G}_C}(M,N)$ is independent of the choice of the acyclic complex $T$ constructed in \ref{TTS}.
\end{prop}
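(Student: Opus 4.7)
The plan is a dimension-shifting argument that reduces $\hh_i(T\otimes_RN)$ to absolute $\Tor$ in a sufficiently high degree, where $T$ and $T'$ each coincide with an ordinary projective resolution of $M$ and all dependence on the choice of complete resolution disappears.

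The first step is to iterate \ref{noted} to construct a family of short exact sequences
$$0\to N^k\to J^k\to N^{k+1}\to 0,\qquad k\ge 0,$$
with $N^0=N$, each $J^k$ a $C$-injective module, and each $N^{k+1}$ again in $\ac$; the induction keeps going because $\ac$ is closed under extensions by \ref{ABC}(iii) and every $C$-injective module lies in $\ac$ by \ref{HD}(iii).

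Next, I would tensor each of these sequences with the complex $T$ and verify that the result is a short exact sequence of complexes. This is the key technical point, and it is exactly where the hypothesis $N\in\ac$ is used: each $T_i$ is either projective (hence flat) or of the form $P_i\otimes_RC$ with $P_i$ projective, in which case
$$\Tor_1^R(T_i,N^{k+1})\cong P_i\otimes_R\Tor_1^R(C,N^{k+1})=0$$
by the Auslander-class condition on $N^{k+1}$. Lemma \ref{ppc} then says that $T\otimes_RJ^k$ is acyclic, so the long exact sequence of homology collapses to dimension-shifting isomorphisms $\hh_i(T\otimes_RN^k)\cong\hh_{i+1}(T\otimes_RN^{k+1})$ for every $i\in\ZZ$ and $k\ge 0$. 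Iterating yields $\hh_i(T\otimes_RN)\cong\hh_{i+k}(T\otimes_RN^k)$ for all $k$, and the same argument with $T'$ in place of $T$ gives $\hh_i(T'\otimes_RN)\cong\hh_{i+k}(T'\otimes_RN^k)$.

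For the final step, I would fix any $i\in\ZZ$ and choose $k$ so that $i+k>n$. By the construction in \ref{TTS}, both $T$ and $T'$ agree above degree $n$ with ordinary projective resolutions of $M$, so $\hh_{i+k}(T\otimes_RN^k)$ and $\hh_{i+k}(T'\otimes_RN^k)$ both compute the absolute Tor module $\Tor_{i+k}^R(M,N^k)$, which is intrinsic to $M$ and $N^k$ and independent of any resolution. Splicing the isomorphisms produced above then gives $\hh_i(T\otimes_RN)\cong\hh_i(T'\otimes_RN)$, establishing the claim and, in particular, the well-definedness of $\tate_i^{\mathcal{G}_C}(M,N)$. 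The main obstacle is the degreewise exactness in the second step; this is precisely where membership of $N$ in $\ac$ is indispensable, and the argument makes transparent why the proposition cannot do without it.
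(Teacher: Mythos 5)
Your proposal is correct and follows essentially the same route as the paper: use \ref{noted} to produce a short exact sequence with a $C$-injective middle term, observe that tensoring with $T$ stays exact because $N$ (and its cokernel) lie in $\ac$, invoke Lemma \ref{ppc} to get the dimension-shift isomorphism, and land in degrees above $n$ where $T$ and $T'$ agree with ordinary projective resolutions. The only difference is that you spell out the iteration $N=N^0\to N^1\to\cdots$ explicitly, whereas the paper performs a single shift and leaves the (decreasing) induction implicit in its concluding sentence.
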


\begin{proof} Note, since $N\in\ac$, it follows from \ref{noted} that there is an exact sequence of $R$-modules $0 \to N \to J \to K \to 0$, where $J$ is $C$-injective. One has $K\in\ac$, so that $\Tor_i^R(Q,K)=0$ for all $C$-projective $R$-modules $Q$ and for all $i\geq 1$. Thus, we have an exact sequence of complexes $0 \to T\otimes_RN \to T\otimes_RJ \to T\otimes_RK \to 0$. Note, by Lemma \ref{ppc}, the complex $T\otimes_RJ$ is acyclic. Therefore, it follows that $\hh_i(T\otimes_RN) \cong \hh_{i+1}(T\otimes_RK)$ for all $i\in\ZZ$. Similarly, we see that $\hh_i(T'\otimes_RN) \cong \hh_{i+1}(T'\otimes_RK)$ for all $i\in\ZZ$.

Recall that $T_{\geq n}=P_{\geq n}$ and $T'_{\geq n}=P'_{\geq n}$; see \ref{TTS}. Hence, for all $i\geq n+1$, we have $\hh_{i}(T\otimes_RN) \cong \hh_{i}(T'\otimes_RN)$ and $\hh_{i}(T\otimes_RK) \cong \hh_{i}(T'\otimes_RK)$. This yields the isomorphism $\hh_{i}(T\otimes_RN) \cong \hh_{i}(T'\otimes_RN)$ for each $i\in\ZZ$.
\end{proof}

\section{Proof of the main result}
\noindent
In this section we give a proof of our main theorem, namely Theorem \ref{thmintro}. First we prove and record several results that are needed for our argument.

\begin{rmk}\label{l}
Let $M$ and $N$ be $R$-modules.
\begin{enumerate}[\rm(i)]
\item If $M$ is totally $C$-reflexive and $\pcd_R(N)<\infty$, then it follows $\Ext^i_R(M,N)=0$ for all $i\geq 1$; see \cite[3.7]{SSW1}.
\item Assume $\pcd_R(M)<\infty$. Then it follows from part (i) that each $\mathcal{P}_C$-resolution of $M$ is also a proper $\mathcal{G}_C$-resolution. Therefore,
$\mathcal{G}_C\Tor_i(M,N)$ is well-defined for each integer $i$; see \ref{HD}(i) and \ref{RTS}. Also, we have that $\Tor_i^{\mathcal{P}_C}(M,N) \cong \mathcal{G}_C\Tor_i(M,N)$ for each $i\geq 0$; see \ref{TPC}.
\item If $\pcd_R(M)<\infty$ and $N\in\mathcal{A}_C$, then we have $\Tor_i^R(M,N) \cong \Tor_i^{\mathcal{P}_C}(M,N) \cong \mathcal{G}_C\Tor_i(M,N) $ for each $i\geq0$; note that this isomorphism follows from part (ii) in view of \ref{HD}(ii) and \ref{TPC}(ii).
\end{enumerate}
\end{rmk}

\begin{lem}\label{ll} Let $0\rightarrow Q\rightarrow X\rightarrow M\rightarrow0$ be a $\gmc$-proper exact sequence of $R$-modules of finite $\gmc$-dimension, and let $N$ be an $R$-module. Then, for each integer $i$, there is an exact sequence of the form:
	$$\cdots\rightarrow\mathcal{G}_C\Tor_i(Q,N)\rightarrow\mathcal{G}_C\Tor_i(X,N)\rightarrow\mathcal{G}_C\Tor_i(M,N)\rightarrow \mathcal{G}_C\Tor_{i-1}(Q,N)\rightarrow \cdots.$$
\end{lem}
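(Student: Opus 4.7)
The plan is to produce a degree-wise split short exact sequence of $\gmc$-proper resolutions of $Q$, $X$, and $M$, tensor it with $N$, and read off the long exact sequence in homology.

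By \ref{HD}(i), choose $\gmc$-proper resolutions $Y\to Q$ and $Z\to M$. A relative horseshoe-style construction---built one degree at a time by lifting the surjection $Z_0\to M$ through $X\to M$ (possible because $Z_0$ is totally $C$-reflexive and the $\gmc$-proper sequence $0\to Q\to X\to M\to 0$ becomes exact after $\Hom_R(Z_0,-)$) and combining with the composite $Y_0\to Q\hookrightarrow X$---produces a resolution $W\to X$ with $W_n=Y_n\oplus Z_n$ together with a degree-wise split short exact sequence of complexes
$$0\to Y\to W\to Z\to 0.$$
Each $W_n$ is totally $C$-reflexive since the class of totally $C$-reflexive modules is closed under finite direct sums, and $W\to X$ is a resolution by the usual snake-lemma verification.

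The crux, and the main obstacle, is to show that $W\to X$ is $\gmc$-proper, i.e.\ that $\Hom_R(A,W^+)$ is acyclic for every totally $C$-reflexive $A$. Applying $\Hom_R(A,-)$ to the augmented version of the above short exact sequence (whose degree $-1$ row is $0\to Q\to X\to M\to 0$) yields a sequence
$$0\to \Hom_R(A,Y^+)\to \Hom_R(A,W^+)\to \Hom_R(A,Z^+)\to 0,$$
which is short exact in every degree: degree-wise splitness handles positions $\geq 0$, and at degree $-1$ exactness is precisely the $\gmc$-properness of the input sequence $0\to Q\to X\to M\to 0$. Since the outer complexes are acyclic (because $Y$ and $Z$ are $\gmc$-proper resolutions), the resulting long exact sequence in homology forces $\Hom_R(A,W^+)$ to be acyclic, as required.

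Finally, tensoring $0\to Y\to W\to Z\to 0$ with $N$ preserves exactness by degree-wise splitness, so the associated long exact sequence in homology reads
$$\cdots\to \H_i(Y\otimes_R N)\to \H_i(W\otimes_R N)\to \H_i(Z\otimes_R N)\to \H_{i-1}(Y\otimes_R N)\to\cdots,$$
which by \ref{RTS} is precisely the claimed long exact sequence relating $\gmc\Tor_\ast(Q,N)$, $\gmc\Tor_\ast(X,N)$, and $\gmc\Tor_\ast(M,N)$.
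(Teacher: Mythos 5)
Your proof is correct and fills in the argument the paper elides: the paper proves this lemma with a one-line citation to Enochs--Jenda \cite[8.2.3]{EJ}, and what you wrote is essentially the relative horseshoe-lemma reasoning behind that citation. The one step worth spelling out is the inductive continuation of the construction: past degree zero, the short exact sequence of syzygies $0\to K_Y\to K_W\to K_Z\to 0$ must itself be $\gmc$-proper so that $\Hom_R(Z_1,K_W)\to\Hom_R(Z_1,K_Z)$ is surjective and the next lift exists. That follows by the same pattern as your final check: apply $\Hom_R(A,-)$, for $A$ totally $C$-reflexive, to the three-by-three diagram whose rows are the split sequence $0\to Y_0\to W_0\to Z_0\to 0$, the given $\gmc$-proper sequence $0\to Q\to X\to M\to 0$, and the kernel row; the first two rows and the two outer columns stay exact, and the snake lemma then forces exactness of $0\to\Hom_R(A,K_Y)\to\Hom_R(A,K_W)\to\Hom_R(A,K_Z)\to 0$. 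With that in place, your construction, the properness verification via the long exact sequence in homology, and the final tensoring step are all correct.
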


\begin{proof} The claim is a consequence of \cite[8.2.3]{EJ}.
\end{proof}

\begin{lem}\label{l33} Let $0\rightarrow N'\rightarrow N\rightarrow N''\rightarrow0$ be an exact sequence of $R$-modules, where $N'$, $N$ and $N''$ belong to $\ac$, and let $M$ be an $R$-module such that $\gkd_R(M)<\infty$. Then, for each integer $i$, there is an exact sequence of the form:
$$\cdots\longrightarrow\tate_i^{\mathcal{G}_C}(M,N)\longrightarrow\tate_i^{\mathcal{G}_C}(M,N'')
\longrightarrow\tate_{i-1}^{\mathcal{G}_C}(M,N')\longrightarrow\cdots$$
\end{lem}

\begin{proof}
Let $\gkd_R M=n$. Consider a projective resolution $P$ of $M$ and an acyclic complex $T$  such that $T_{\geq n}=P_{\geq n}$ and $\mathsf{\Sigma}^{-n}T$ is a complete $\mathcal{P}\mathcal{P}_C$-resolution; see \ref{TTS}. As $N''\in\ac$, the exact sequence $0\rightarrow N'\rightarrow N\rightarrow N''\rightarrow0$ yields the exact sequence of complexes as $0 \to T\otimes_RN' \to T\otimes_RN \to T\otimes_RN'' \to 0$; see \ref{ABC}(i)(b). This exact sequence of complexes yields the required long exact sequence.
\end{proof}

\begin{lem}\label{l3} Let $0\rightarrow M'\rightarrow M\rightarrow M''\rightarrow0$ be an exact sequence of $R$-modules of finite $\gmc$-dimension, and let $N$ be an $R$-module that belongs to $\ac$. Then, for each integer $i$, there is an exact sequence of the form:
$$\cdots \longrightarrow \tate_i^{\mathcal{G}_C}(M',N) \longrightarrow\tate_i^{\mathcal{G}_C}(M,N)\longrightarrow\tate_i^{\mathcal{G}_C}(M'',N)
\longrightarrow\tate_{i-1}^{\mathcal{G}_C}(M',N)\longrightarrow\cdots.$$
\end{lem}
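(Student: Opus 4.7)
The plan is to adapt the Avramov--Martsinkovsky construction, building a short exact sequence of complete $\mathcal{P}\mathcal{P}_C$-resolutions for $M'$, $M$, and $M''$ and then tensoring with $N$. First, I would choose an integer $n$ that dominates the three $\gmc$-dimensions, so that any $n$-th syzygies of $M'$, $M$, $M''$ in projective resolutions are totally $C$-reflexive (using \ref{HD}(i)). Applying the horseshoe lemma to $0\to M'\to M\to M''\to 0$ yields a short exact sequence $0\to P'\to P\to P''\to 0$ of projective resolutions; truncating at level $n$ gives a short exact sequence $0\to \Omega^{n}M'\to \Omega^{n}M\to \Omega^{n}M''\to 0$ of totally $C$-reflexive modules.

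Next I would dualize with $(-)^{\triangledown}=\Hom_R(-,C)$. Since $\Omega^{n}M''$ is totally $C$-reflexive, $\Ext^{1}_R(\Omega^{n}M'',C)=0$, so $0\to (\Omega^{n}M'')^{\triangledown}\to (\Omega^{n}M)^{\triangledown}\to (\Omega^{n}M')^{\triangledown}\to 0$ is exact. Applying the horseshoe lemma once more produces a short exact sequence of projective resolutions $0\to Q''\to Q\to Q'\to 0$ of these duals, and a final application of $(-)^{\triangledown}$ gives $0\to (Q')^{\triangledown}\to Q^{\triangledown}\to (Q'')^{\triangledown}\to 0$, a sequence of complexes of $C$-projective modules. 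Crucially, this is split in every degree, because every short exact sequence of finitely generated projective modules splits and $(-)^{\triangledown}$ preserves that splitting. Using the biduality isomorphism $\Omega^{n}M\cong (\Omega^{n}M)^{\triangledown\triangledown}$ (and its analogues for $M'$ and $M''$), I would splice these $C$-projective coresolutions onto the corresponding projective resolutions from the first horseshoe to obtain a degreewise split short exact sequence $0\to T'\to T\to T''\to 0$ of complexes, where each complex is of the form described in \ref{TTS} for the corresponding module.

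Because the sequence is split in each degree, $0\to T'\otimes_R N\to T\otimes_R N\to T''\otimes_R N\to 0$ is an exact sequence of complexes, and its long exact sequence in homology is the assertion of the lemma, by the definition of $\tate_i^{\mathcal{G}_C}(-,N)$ and by Proposition \ref{ppcc}. Note that the assumption $N\in\ac$ is used only to ensure that the Tate Tors are well-defined; the exactness after tensoring is a consequence of the degreewise splitting alone and does not require any flatness condition on $N$.

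The main technical obstacle I anticipate is the bookkeeping required to align the two horseshoe constructions coherently across degree $n$, so that the biduality isomorphisms $\Omega^{n}M\cong(\Omega^{n}M)^{\triangledown\triangledown}$ fit into a commutative ladder and produce honest chain maps $T'\to T$ and $T\to T''$. This is essentially a diagram chase built on the naturality of the horseshoe lemma and of $(-)^{\triangledown\triangledown}$, and should not present a serious difficulty.
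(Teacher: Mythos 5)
Your proof is correct and follows essentially the same route as the paper: the paper likewise reduces the claim to a degreewise split short exact sequence $0\to T'\to T\to T''\to 0$ of complete $\mathcal{P}\mathcal{P}_C$-resolutions obtained via the horseshoe lemma, citing \cite[5.5]{AM} and \cite[3.9]{SaSW} for the construction that you carry out by hand (dualize the syzygy sequence, apply a second horseshoe, dualize again, and splice using biduality). Your closing remark that $N\in\mathcal{A}_C$ is needed only so that the Tate Tors are well-defined by Proposition \ref{ppcc}, and not for exactness after tensoring, is exactly the point the paper makes as well.
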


\begin{proof} Set $\max\{\gkd_R M',\gkd_R M''\}=n$. Let $P'\to M'$ and $P''\to M''$ be projective resolutions of $M'$ and $M''$, respectively. Then, by the classical Horseshoe lemma, there is a projective resolution $P \to M$ of $M$ such that the sequence $0 \to P' \to P \to P'' \to 0$ is degree-wise split exact.

Let $T'$ and $T''$ be acyclic complexes such that $T'_{\geq n}= P'_{\geq n}$, $T''_{\geq n}= P''_{\geq n}$, and $\mathsf{\Sigma}^{-n}T'$ and $\mathsf{\Sigma}^{-n}T''$ are complete $\mathcal{P}\mathcal{P}_C$-resolutions; see \ref{TTS}. Now, by using a similar construction as in \cite[5.5]{AM} (see also \cite[3.9]{SaSW}), one gets an acyclic complex $T$ such that $T_{\geq n}=P_{\geq n}$, $\mathsf{\Sigma}^{-n}T$ is a complete $\mathcal{P}\mathcal{P}_C$-resolution, and there is a degree-wise split exact sequence $0 \to T' \to T \to T'' \to 0$ (we note that, for this argument, there is no requirement to construct a morphism of complexes from $T$ to $P$.) This yields the exact sequence $0 \to T'\otimes_RN \to T\otimes_RN \to T''\otimes_RN \to 0$. Now the desired long exact sequence follows due to the facts $\tate_i^{\mathcal{G}_C}(M',N)\cong\hh_i(T'\otimes_RN)$, $\tate_i^{\mathcal{G}_C}(M,N)\cong\hh_i(T\otimes_RN)$ and $\tate_i^{\mathcal{G}_C}(M'',N)\cong\hh_i(T''\otimes_RN)$.
\end{proof}

\begin{prop}\label{l4} Let $M$ be an $R$-module such that $\pcd_R M<\infty$. If $N$ is an $R$-module such that $N\in \ac$, then the module $\tate_i^{\mathcal{G}_C}(M,N)$ is defined and vanishes for each integer $i$.
\end{prop}

\begin{proof} It is clear from \ref{HD} that $\gkd_R M<\infty$. Hence, if $Y$ is an $R$-module such that $Y\in \ac$, then $\tate_i^{\mathcal{G}_C}(M,Y)$ is defined for each integer $i$; see (\ref{TTS}.1).

Set $\pcd_R M=n$. Note that $M\in\bc$; see \ref{HD}(ii). Moreover, by \cite[2.16]{W}, it follows that $\gkd_R M=n$. Hence, for each $R$-module $Y$ with $Y\in\ac$, we have:
\begin{equation}\tag{\ref{l4}.1}
\tate_i^{\mathcal{G}_C}(M,Y)\cong\Tor_i^R (M,Y)\cong\Tor_i^{\mathcal{P}_C}(M,Y)=0 \text{ for each } i>n.
\end{equation}
Here, in (\ref{l4}.1), the first isomorphism follows from (\ref{TTS}.2), the second isomorphism holds by Remark \ref{l}(iii), and the equality is due to the assumption $\pcd_R M=n$.
Next we proceed to prove a claim.

Claim: If $J$ is a $C$-injective $R$-module, then $\tate_i^{\mathcal{G}_C}(M,J)$ is defined and vanishes for each integer $i$.\\
Proof of the claim: Note that $J\in \ac$; see \ref{HD}(iii). Therefore, $\tate_i^{\mathcal{G}_C}(M,J)$ is defined for each integer $i$; see (\ref{TTS}.1). Let $P\to M$ be a projective resolution of $M$, and let $T$ be an acyclic complex such that $T_{\geq n}= P_{\geq n}$ and $\mathsf{\Sigma}^{-n}T$ is a complete $\mathcal{P}\mathcal{P}_C$-resolution; see \ref{TTS}. Then Lemma \ref{ppc} implies that $\hh_i(T\otimes_RJ)=0$ for each integer $i$. As $\tate_i^{\mathcal{G}_C}(M,J)\cong \hh_i(T\otimes_RJ)$ for each integer $i$, the claim follows.

Now let $N$ be an $R$-module such that $N\in \ac$. Fix an integer $j$ and consider $\tate_j^{\mathcal{G}_C}(M,N)$. If $j>n$, then $\tate_j^{\mathcal{G}_C}(M,N)$ vanishes by (\ref{l4}.1). Hence we may assume $j\leq n$.

It follows from \ref{noted} that there is an exact sequence $0 \to N \to J_1 \to Y_1 \to 0$, where $J_1$ is $C$-injective and $Y_1\in\ac$. It follows by the claim and Lemma \ref{l33} that:
\begin{equation}\notag{}
\tate_i^{\mathcal{G}_C}(M,N) \cong \tate_{i+1}^{\mathcal{G}_C}(M,Y_1) \text{ for each integer } i.
\end{equation}

As $Y_1\in\ac$, we make use of  \ref{noted} with $Y_1$, and obtain an exact sequence $0 \to Y_1 \to J_2 \to Y_2 \to 0$, where $J_2$ is $C$-injective and $Y_2\in\ac$. It follows by the claim and Lemma \ref{l33} that:
\begin{equation}\notag{}
\tate_i^{\mathcal{G}_C}(M, Y_1) \cong \tate_{i+1}^{\mathcal{G}_C}(M,Y_2) \text{ for each integer } i.
\end{equation}

Therefore, for each integer $i$, it follows that $\tate_i^{\mathcal{G}_C}(M,N) \cong \tate_{i+2}^{\mathcal{G}_C}(M,Y_2)$. We continue in this way; for each $r\geq 1$, we obtain an $R$-module $Y_r$ such that $Y_r \in \ac$ and the following isomorphism holds:
\begin{equation}\tag{\ref{l4}.2}
\tate_i^{\mathcal{G}_C}(M, N) \cong \tate_{i+r}^{\mathcal{G}_C}(M,Y_r) \text{ for each integer } i.
\end{equation}
Now letting $r=n-j+1$, (\ref{l4}.2) yields $\tate_j^{\mathcal{G}_C}(M, N) \cong \tate_{n+1}^{\mathcal{G}_C}(M,Y_{n-j+1})$, where $\tate_{n+1}^{\mathcal{G}_C}(M,Y_{n-j+1})=0$ due to (\ref{l4}.1). This proves $\tate_i^{\mathcal{G}_C}(M,N)=0$ for each $i\leq n$, and completes the proof of the proposition.
\end{proof}

\begin{rmk}\label{rm} (\cite[1.1]{AuBu}) Let $M$ be an $R$-module of finite $\mathcal{G}_C$-dimension.
\begin{enumerate}[\rm(i)]
\item There is an exact sequence of $R$-modules of the form $0 \to Y \to X \to M \to 0$, where $X$ is totally $C$-reflexive and $Y$ has finite $\mathcal{P}_C$-dimension. Such a short exact sequence is called a \emph{$\mathcal{G}_C$-approximation} of $M$.
\item There is an exact sequence of $R$-modules of the form $0 \to M \to X \to Y \to 0$, where $Y$ is totally $C$-reflexive and $\gkd_R(M)=\mathcal{P}_C$-$\pd_R(X)$. Such a short exact sequence is called a \emph{$\mathcal{G}_C$-hull} of $M$.\pushQED{\qed}
\qedhere
\popQED	
\end{enumerate}
\end{rmk}

We are now ready to prove our main result:

\begin{proof}[Proof of Theorem \ref{thmintro}] We recall that $\gkd_R(M)=n$, and consider a $\gmc$-approximation of $M$, i.e., a short exact sequence of $R$-modules
\begin{equation}\tag{\ref{thmintro}.1}
0\to Y\to X\to M \to 0,
\end{equation}
where $X$ is totally $C$-reflexive and $\pcd_R(Y)<\infty$; see Remark \ref{rm}(i).

Note, since $X$ is totally $C$-reflexive, we have $\mathcal{G}_C\Tor_i(X,N)=0$ for all $i\geq 1$; see \ref{RTS}. Hence, by Remark \ref{l}(i), the exact sequence in (\ref{thmintro}.1) is $\gmc$-proper. Therefore, Lemma \ref{ll} induces the following exact sequence:
\begin{equation}\tag{\ref{thmintro}.2}
0\to\mathcal{G}_C\Tor^R_1(M,N) \to Y\otimes_RN\to X\otimes_RN\to M\otimes_R N\to0.
\end{equation}

Moreover, for each $i\geq 2$, the following isomorphisms hold:
\begin{equation}\tag{\ref{thmintro}.3}
\mathcal{G}_C\Tor^R_i(M,N)\cong\mathcal{G}_C\Tor^R_{i-1}(Y,N)\cong\Tor_{i-1}^R(Y,N).
\end{equation}
Here, in (\ref{thmintro}.3), the first isomorphism follows from (\ref{thmintro}.1) and Lemma \ref{ll}. Moreover, as $N\in \ac$ and $\pcd_R(Y)<\infty$, the second isomorphism in (\ref{thmintro}.3) is due to Remark \ref{l}(iii).

We have that $\tate_{*}^{\mathcal{G}_C}(Y,N)=0$; see Proposition \ref{l4}. So, for each integer $i$, it follows from Lemma \ref{l3} that there is an exact sequence of the form:
\begin{equation}\tag{\ref{thmintro}.4}
0=\tate_i^{\mathcal{G}_C}(Y,N) \longrightarrow \tate_i^{\mathcal{G}_C}(X,N)
\underset{\cong}{\overset{f_i}{\longrightarrow}} \tate_{i}^{\mathcal{G}_C}(M,N) \longrightarrow \tate_{i-1}^{\mathcal{G}_C}(Y,N)=0.
\end{equation}

The construction given in \ref{TTS} yields the following isomorphism for each $i>\gkd_R X=0$:
\begin{equation}\tag{\ref{thmintro}.5}
\tate_i^{\mathcal{G}_C}(X,N) \underset{\cong}{\overset{g_i}{\longrightarrow}} \Tor_i^R(X,N).
\end{equation}

The short exact sequence in (\ref{thmintro}.1), along with \ref{AA}(ii), imply that $\gkd_R(Y)=\pcd_R(Y)=n-1$. Thus, $ \Tor_i^{\mathcal{P}_C}(Y,N) \cong \Tor_i^R(Y,N)=0$ for all $i\geq n$; see \ref{l}(iii).
Now, by applying $-\otimes_RN$ to the short exact sequence in (\ref{thmintro}.1), we get the
following long exact sequence:
\begin{equation}\tag{\ref{thmintro}.6}\\
\begin{CD}
0@>>>\Tor_n^R(X,N)@>\phi_n>>\Tor^R_n(M,N)@>\delta_n>>\Tor_{n-1}^R(Y,N)@>\beta{n-1}>>&\\
\cdots@>>>\Tor_1^R(Y,N)@>\beta_1>>\Tor_1^R(X,N)@>\phi_1>>\Tor_1^R(M,N)@>\delta_1>> \\
&&Y\otimes_RN@>>> X\otimes_RN @>>>M\otimes_R N@>>>0.
\end{CD}
\end{equation}

We make use of the maps from (\ref{thmintro}.4) and (\ref{thmintro}.5), and set $\psi_i=\phi_{i} \circ g_i \circ f_i^{-1}$ for each $i\geq 1$. Thus we obtain the following commutative diagram:
$$\begin{CD}
\tate_i^{\mathcal{G}_C}(X,N)@>f_i>>\tate_i^{\mathcal{G}_C}(M,N)&\\
 @VV{g_i}V @VV{\psi_i}V \\
\Tor_i(X,N)@>{\phi_i}>>\Tor_i(M,N)
\end{CD}$$
Next, for each $i\geq 2$, we set $\alpha_i:\Tor_{i-1}^R(Y,N)\overset{\cong}{\to}\mathcal{G}_C\Tor_i^R(M,N)$ to be the isomorphism obtained from (\ref{thmintro}.3), and $\xi_i=\alpha_i\circ\delta_i:\Tor_i^R(M,N)\to\mathcal{G}_C\Tor_i^R(M,N)$. Moreover, we define  $\xi_1$ as the composition of the following maps:
\begin{equation}\tag{\ref{thmintro}.7}
\Tor_1^R(M,N) \twoheadrightarrow \im(\delta_1)=\ker(Y\otimes_RN\to X\otimes_RN) \cong \mathcal{G}_C\Tor_1(M,N).
\end{equation}
Here, in (\ref{thmintro}.7), the arrow and the first equality follow from (\ref{thmintro}.6), while the isomorphism is due to (\ref{thmintro}.2).

Finally, for each $i\geq 1$, we define the map $\delta'_i$ to be the composition of the following maps:
\begin{equation}\tag{\ref{thmintro}.8}
\begin{CD}
\mathcal{G}_C\Tor^R_{i+1}(M,N) @>\alpha^{-1}_{i+1}>>\Tor^R_i(Y,N) \overset{\beta_i}{\longrightarrow} \Tor^R_{i}(X,N) \overset{g^{-1}_i}{\longrightarrow} \tate_i^{\mathcal{G}_C}(X,N)  \overset{f_i}{\longrightarrow} \tate_i^{\mathcal{G}_C}(M,N).
\end{CD}
\end{equation}

Now, it follows that the following sequence
\begin{equation}\tag{\ref{thmintro}.9}
\cdots\to\mathcal{G}_C\Tor^R_{i+1}(M,N)\overset{\delta'_i}{\longrightarrow}\tate_i^{\mathcal{G}_C}(M,N)\overset{\psi_i}{\longrightarrow}\Tor_i^R(M,N)\overset{\xi_i}{\longrightarrow}\mathcal{G}_C\Tor_i(M,N)\to\cdots
\end{equation}
is exact. As the sequence obtained in (\ref{thmintro}.9) is the required one, the conclusion of the theorem follows.
\end{proof}

\begin{rmk} It should be noted that Sather-Wagstaff, Sharif and White \cite{SaSW} studied Tate cohomology of objects in abelian categories. Their work generalizes the cohomological version of Theorem \ref{AM} for modules $M$ of finite $\gmc$-dimension which belong to the Bass class $\bc$; see also \cite[4.4]{DZLC}.
\end{rmk}

\section{On generalizations of the depth formula}
\noindent
Our main purpose in this section is to give an application of Theorem \ref{thmintro} and prove Theorem \ref{thmintro2}, which generalizes an interesting result of Yassemi discussed in the introduction; see also \cite[2.11]{Y}.  Along the way, we also generalize several related results from the literature. For example, Theorem \ref{tt} is a vast generalization of Auslander's depth formula, which we recall here:

\begin{thm} (Auslander \cite[1.2]{A}) \label{audepth} Let $M$ and $N$ be $R$-modules and let $q=\sup\{i\mid\Tor_i^R(M,N)\neq0\}$. Assume $\pd_R(M)<\infty$. Then, if $q=0$ or $\depth_R(\Tor_q^R(M,N))\leq1$, then the depth formula holds, i.e., $$\depth_R(M)+\depth_R(N)=\depth R+\depth_R(\Tor_q^R(M,N))-q.$$
\end{thm}
The proof of Theorem \ref{thmintro2}, which is given at the end of this section, relies upon Theorem \ref{thmintro}, Proposition \ref{llll} and Theorem \ref{tt}; see the paragraph following the proof of Proposition  \ref{llll}. We start by proving the next proposition which generalizes a result of Avramov and Buchweitz; see \cite[4.4.7]{AvBu}:

\begin{prop}\label{lll} Let $M$ and $N$ be $R$-modules such that $M$ is totally $C$-reflexive and $N\in\mathcal{A}_C$. Then, for each $i\leq0$, the following isomorphism holds:
$$\tate_i^{\mathcal{G}_C}(M,N)\cong\Ext^{-i+1}_R(\trk M,N\otimes_RC).$$
\end{prop}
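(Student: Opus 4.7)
The plan is to construct the Tate complex $T$ for $M$ explicitly (exploiting that $\gkd_R(M)=0$ since $M$ is totally $C$-reflexive) and compute $\hh_i(T\otimes_R N)$ directly for $i\leq 0$, then match the outcome to the claimed $\Ext$ of $\trk M$. To build $T$, I fix projective resolutions $P_\bullet\to M\to 0$ and $Q_\bullet\to M^{\triangledown}\to 0$, with all $P_j$ and $Q_j$ finitely generated. Since $M$ is totally $C$-reflexive, applying $(-)^{\triangledown}$ to $Q_\bullet\to M^{\triangledown}$ yields an exact sequence $0\to M\to Q_0^{\triangledown}\to Q_1^{\triangledown}\to\cdots$, and splicing this with $P_\bullet\to M$ along $P_0\twoheadrightarrow M\hookrightarrow Q_0^{\triangledown}$ produces a $T$ of the form prescribed in \ref{TTS} with $T_j=P_j$ for $j\geq 0$ and $T_{-j-1}=Q_j^{\triangledown}$ for $j\geq 0$.

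Because each $Q_j$ is finitely generated projective, I have the natural isomorphism $Q_j^{\triangledown}\otimes_R N\cong\Hom_R(Q_j,N\otimes_R C)$, so in degrees $\leq -1$ the complex $T\otimes_R N$ agrees (up to re-indexing) with the Hom-complex $\Hom_R(Q_\bullet,N\otimes_R C)$, whose cohomology computes $\Ext^*_R(M^{\triangledown},N\otimes_R C)$. For $i\leq -2$ this yields $\hh_i(T\otimes_R N)\cong\Ext^{-i-1}_R(M^{\triangledown},N\otimes_R C)$ at once. For the splicing degrees $i=0$ and $i=-1$ a little more care is needed: tracing the inclusion $M\hookrightarrow Q_0^{\triangledown}$ through naturality of the evaluation map $\theta_X\colon X\otimes_R N\to\Hom_R(X^{\triangledown},N\otimes_R C)$ shows that the induced map $M\otimes_R N\to Q_0^{\triangledown}\otimes_R N\cong\Hom_R(Q_0,N\otimes_R C)$ factors as $M\otimes_R N\xrightarrow{\theta_M}\Hom_R(M^{\triangledown},N\otimes_R C)\hookrightarrow\Hom_R(Q_0,N\otimes_R C)$, with the last arrow induced by the surjection $Q_0\twoheadrightarrow M^{\triangledown}$. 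A short diagram chase (using the right-exactness of $P_1\otimes_R N\to P_0\otimes_R N\to M\otimes_R N\to 0$) then yields $\hh_0(T\otimes_R N)\cong\ker\theta_M$ and $\hh_{-1}(T\otimes_R N)\cong\coker\theta_M$.

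The key ingredient to conclude is \ref{ABC}(vii), which, since $N\in\ac$, provides the exact sequence $0\to\Ext^1_R(\trk M,N\otimes_R C)\to M\otimes_R N\xrightarrow{\theta_M}\Hom_R(M^{\triangledown},N\otimes_R C)\to\Ext^2_R(\trk M,N\otimes_R C)\to 0$; this settles the cases $i=0$ and $i=-1$. For $i\leq -2$ I will dimension-shift along the defining sequence $0\to M^{\triangledown}\to P_0^{\triangledown}\to P_1^{\triangledown}\to\trk M\to 0$ of \ref{at}: each $P_j^{\triangledown}$ is a direct summand of a finite direct sum of copies of $C$, and $N\in\ac$ gives $\Ext^{\geq 1}_R(C,N\otimes_R C)=0$, whence $\Ext^{\geq 1}_R(P_j^{\triangledown},N\otimes_R C)=0$. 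Splitting the four-term sequence into two short exact pieces and iterating then produces $\Ext^{j+2}_R(\trk M,N\otimes_R C)\cong\Ext^j_R(M^{\triangledown},N\otimes_R C)$ for every $j\geq 1$, which combined with the earlier computation gives the desired formula for $i\leq -2$. The main technical nuisance will be the bookkeeping at the splicing degrees $i\in\{0,-1\}$; once the splicing map is recognized as compatible with the natural evaluation map used in \ref{ABC}(vii), the rest is essentially formal.
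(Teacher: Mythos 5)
Your proof is correct, and it diverges from the paper's argument only at the boundary degrees $i=0$ and $i=-1$. For $i\leq -2$ both proofs do exactly the same thing: identify $Q_j^{\triangledown}\otimes_R N\cong\Hom_R(Q_j,N\otimes_R C)$ so that the negative part of ${\bf T}\otimes_R N$ computes $\Ext^{-i-1}_R(M^{\triangledown},N\otimes_R C)$, and then dimension-shift along (\ref{at}.1) using $\Ext^{\geq 1}_R(C,N\otimes_R C)=0$ (from $N\in\ac$) to convert this into $\Ext^{-i+1}_R(\trk M,N\otimes_R C)$.

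Where you genuinely depart is in the two splicing degrees. The paper introduces the cokernel module $M_1=\coker(M\hookrightarrow Q_0^{\triangledown})$, applies Lemma~\ref{l3} to slide $\tate_*^{\mathcal{G}_C}(M_1,N)\cong\tate_{*-1}^{\mathcal{G}_C}(M,N)$, identifies $\tate_0^{\mathcal{G}_C}(M,N)$ with $\Tor_1^R(M_1,N)$, and then passes to $\ker\eta$ via the commutative square with $\psi$ and $\phi$; the case $i=-1$ is settled by a second dimension shift along the auxiliary sequence $0\to\trk M_1\to(\trk C)^{\oplus n}\to\trk M\to 0$ from (\ref{lll}.6). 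You instead compute $\hh_0({\bf T}\otimes_R N)$ and $\hh_{-1}({\bf T}\otimes_R N)$ directly at the splice, using right-exactness of $-\otimes_R N$ and the naturality of the tensor-evaluation map to recognize $\hh_0\cong\ker\theta_M$ and $\hh_{-1}\cong\coker\theta_M$, and then read both off from the four-term exact sequence of \ref{ABC}(vii). Your route avoids the auxiliary module $M_1$, Lemma~\ref{l3}, and the sequence from \cite[2.2]{DS} entirely, at the cost of a slightly more delicate diagram chase at the splice (in particular, verifying that the map $M\otimes_R N\to Q_0^{\triangledown}\otimes_R N$ agrees, under the canonical isomorphisms $Q_0\cong Q_0^{\triangledown\triangledown}$ and $\theta_{Q_0^{\triangledown}}$, with $\theta_M$ followed by the injection $\Hom_R(M^{\triangledown},N\otimes_R C)\hookrightarrow\Hom_R(Q_0,N\otimes_R C)$). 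Both arguments rest on the same two pillars, \ref{ABC}(vii) and the tensor-evaluation isomorphism for projectives; yours makes the homological bookkeeping more transparent, while the paper's is arguably easier to verify line-by-line because it outsources the splice behavior to Lemma~\ref{l3}.
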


\begin{proof} We set $(-)^{\triangledown}=\Hom_R(-,C)$, and consider the projective resolutions ${\bf P}\to M \to 0$ and ${\bf Q}\to M^{\triangledown} \to 0$, as well as the acyclic complex ${\bf T}=\cdots\to P_1\to P_0\to Q^\triangledown_0\to Q^\triangledown_1\to\cdots$ constructed in \ref{TTS} (recall $\gkd_R(M)=0$). Then there exist a totally $C$-reflexive $R$-module $M_1$ and a short exact sequence of $R$-modules of the form:
\begin{equation}\tag{\ref{lll}.1}
0\to M \to Q^\triangledown_0\to M_1\to0.
\end{equation}

Note that $\Ext^1_R(M_1,C)=0$. Therefore, by applying the functors $(-)^{\triangledown}$ and $\Hom_R(-,N\otimes_RC)$ to (\ref{lll}.1), respectively, we obtain the following commutative diagram:
$$\begin{CD}
M\otimes_RN@>{\psi}>> Q^\triangledown_0\otimes_RN&\\
@V{\eta}VV @V{\phi}VV \\
\Hom_R(M^\triangledown,N\otimes_RC)@>\alpha>>\Hom_R(Q^{\triangledown\triangledown}_0, N\otimes_RC)
\vspace{0.05in}
\end{CD}$$
Here, in the diagram, $\alpha$ is injective and the maps $\eta$ and $\phi$ are natural; see \ref{ABC}(vii). Notice, since $Q_0$ is projective, $\phi$ is an isomorphism. Thus, we have that $\ker(\psi)=\ker(\eta)$.

Next, by tensoring (\ref{lll}.1) with $N$ and noting that $\Tor_1^R(Q^\triangledown_0,N)=0$ (as $N\in\mathcal{A}_C$), we conclude that the following sequence is exact:
\begin{equation}\tag{\ref{lll}.2}
0\to\Tor_1^R(M_1,N)\to M\otimes_RN \stackrel{\psi}
     {\longrightarrow} Q^\triangledown_0\otimes_RN\to M_1\otimes_RN\to0.
\end{equation}
It now follows from \ref{ABC}(vii) and (\ref{lll}.2) that:
\begin{equation}\tag{\ref{lll}.3}
\Tor_1^R(M_1,N)\cong\ker\psi=\ker\eta\cong\Ext^1_R(\trk M,N\otimes_RC).
\end{equation}

On the other hand, for each $i\geq 1$, we have:
\begin{equation}\tag{\ref{lll}.4}
\Tor_i^R(M_1,N)\cong\tate_i^{\mathcal{G}_C}(M_1,N)\cong\tate_{i-1}^{\mathcal{G}_C}(M,N),
\end{equation}
The first isomorphism of (\ref{lll}.4) follows since $M_1$ is totally $C$-reflexive. Furthermore, the second isomorphism of (\ref{lll}.4) follows from (\ref{lll}.1) in view of Lemma \ref{l3} since $N \in \ac$.

We now see from (\ref{lll}.3) and (\ref{lll}.4) that the conclusion of the proposition holds for $i=0$, i.e.,
\begin{equation}\tag{\ref{lll}.5}
\tate_{0}^{\mathcal{G}_C}(M,N)\cong\Ext^1_R(\trk M,N\otimes_RC).	
\end{equation}

We know $\Ext^1_R(M_1,C)=0$. So we use \cite[2.2]{DS} with (\ref{lll}.1) and deduce that the following sequence is exact:
\begin{equation}\tag{\ref{lll}.6}
0\to\trk M_1\to (\trk C)^{\oplus n} \to\trk M \to 0.
\end{equation}

It follows from \ref{ABC}(vii) that $\Ext^i_R(\trk C, N\otimes_RC)=0$ for $i=1,2$. Thus, (\ref{lll}.6) yields the isomorphism $\Ext^2_R(\trk M,N\otimes_RC)\cong\Ext^1_R(\trk M_1,N\otimes_RC)$. As $M_1$ is totally $C$-reflexive, the isomorphism obtained in (\ref{lll}.5) also holds for the pair $(M_1, N)$. In other words, we have that $\tate_{0}^{\mathcal{G}_C}(M_1,N)\cong\Ext^1_R(\trk M_1,N\otimes_RC)$. So, in view of (\ref{lll}.4), we conclude that the following isomorphisms hold:
\begin{equation}\tag{\ref{lll}.7}
\Ext^2_R(\trk M,N\otimes_RC)\cong\Ext^1_R(\trk M_1,N\otimes_RC)\cong\tate_{0}^{\mathcal{G}_C}(M_1,N)\cong\tate_{-1}^{\mathcal{G}_C}(M,N).
\end{equation}
As the isomorphisms in (\ref{lll}.7) establish the conclusion of the proposition for the case where $i=-1$, we proceed to handle the case where $i\leq -2$.

Notice, as each $Q_i$ in the resolution ${\bf T}$ is projective, the tensor evaluation $(Q_i)^{\triangledown}\otimes_RN\rightarrow\Hom_R(Q_i,C\otimes_RN)$ is an isomorphism for each integer $i$. Therefore, for all $j\geq 1$, we have:
\begin{align}\tag{\ref{lll}.8}
\Ext^j_R(M^{\triangledown}, N\otimes_RC)  = \H^{j}\left(\Hom_R({\bf Q}, N\otimes_RC) \right)  = \H_{-j-1}({\bf T}\otimes_RN) = \tate_{-j-1}^{\mathcal{G}_C}(M,N).
\end{align}
Fix $t\geq 2$. Then, by the exact sequence in (\ref{at}.1),  we see that $\Ext^{t-1}_R(M^{\triangledown}, N\otimes_RC)\cong\Ext^{t+1}_R(\trk M, N\otimes_RC)$. Consequently, letting $j=t-1$ in (\ref{lll}.8), we  conclude that $\tate_{-t}^{\mathcal{G}_C}(M,N)\cong\Ext^{t-1}_R(M^{\triangledown}, N\otimes_RC)\cong\Ext^{t+1}_R(\trk M, N\otimes_RC)$. Consequently, setting $i=-t$, the conclusion of the proposition follows.
\end{proof}

\begin{lem}\label{ll2} Let $M$ and $N$ be $R$-modules such that $0<\gkd_R(M)<\infty$ and $N\in\ac$. If $\tate_0^{\mathcal{G}_C}(M,N)=0$ and $0\to M\to X\to Y \to 0$ is a $\mathcal{G}_C$-hull of $M$ (see \ref{rm}(ii)),  then $\gmc\Tor_i^R(M,N)\cong\Tor_i^R(X,N)$ for all $i\geq 1$.
\end{lem}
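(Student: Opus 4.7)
My plan is to apply Theorem \ref{thmintro} and match its exact sequence against the classical Tor long exact sequence arising from the hull, using the hypothesis $\tate_0^{\mathcal{G}_C}(M,N)=0$ to close up the argument at the boundary degree $i=1$.

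The first step is to identify the Tate Tor terms. Applying Lemma \ref{l3} to the hull $0\to M\to X\to Y\to 0$ (here $\pcd_R(X)<\infty$ and $Y$ is totally $C$-reflexive, by Remark \ref{rm}) and using Lemma \ref{l4} to get $\tate_i^{\mathcal{G}_C}(X,N)=0$, the Tate-Tor long exact sequence collapses to $\tate_i^{\mathcal{G}_C}(M,N)\cong \tate_{i+1}^{\mathcal{G}_C}(Y,N)$; combining with $\gkd_R(Y)=0$ and \ref{TTS} this gives $\tate_i^{\mathcal{G}_C}(M,N)\cong \Tor_{i+1}^R(Y,N)$ for every $i\ge 0$. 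Under this identification, the hypothesis $\tate_0^{\mathcal{G}_C}(M,N)=0$ becomes $\Tor_1^R(Y,N)=0$, which is what will make the low-degree computation work.

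Substituting this identification into the sequence from Theorem \ref{thmintro} produces, for each $i\ge 1$, the exact piece
\[
\mathcal{G}_C\Tor^R_{i+1}(M,N)\xra{\delta'_i}\Tor^R_{i+1}(Y,N)\xra{\psi_i}\Tor^R_i(M,N)\xra{\xi_i}\mathcal{G}_C\Tor^R_i(M,N)\xra{\delta'_{i-1}}\Tor^R_i(Y,N),
\]
which I would line up against the hull's classical Tor long exact sequence
\[
\Tor^R_{i+1}(X,N)\to\Tor^R_{i+1}(Y,N)\xra{\partial_{i+1}}\Tor^R_i(M,N)\to\Tor^R_i(X,N)\to\Tor^R_i(Y,N)\xra{\partial_i}\Tor^R_{i-1}(M,N).
\]
By naturality of the connecting isomorphism in Lemma \ref{l3} and of the construction $\psi_i=\phi_i\circ g_i\circ f_i^{-1}$ in the proof of Theorem \ref{thmintro}, the maps $\psi_i$ and $\psi_{i-1}$ correspond to the Tor connecting maps $\partial_{i+1}$ and $\partial_i$ respectively. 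The natural map $\mathcal{G}_C\Tor^R_i(M,N)\to \mathcal{G}_C\Tor^R_i(X,N)\cong \Tor^R_i(X,N)$ (functoriality of $\mathcal{G}_C\Tor$ in the first slot together with Lemma \ref{l}) gives a morphism between the two middle terms, and a five-lemma chase shows it is an isomorphism for every $i\ge 2$.

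For the boundary case $i=1$, the sequence from Theorem \ref{thmintro} terminates with $\xi_1\colon \Tor^R_1(M,N)\twoheadrightarrow \mathcal{G}_C\Tor^R_1(M,N)$, so $\mathcal{G}_C\Tor^R_1(M,N)\cong\coker(\psi_1)\cong \coker(\partial_2)$. Exactness of the hull Tor LES identifies this cokernel with the image of $\Tor^R_1(M,N)\to \Tor^R_1(X,N)$, i.e., with $\ker(\Tor^R_1(X,N)\to \Tor^R_1(Y,N))$. The vanishing $\Tor^R_1(Y,N)=0$ — precisely the hypothesis $\tate_0^{\mathcal{G}_C}(M,N)=0$ — forces this kernel to be all of $\Tor^R_1(X,N)$, yielding $\mathcal{G}_C\Tor^R_1(M,N)\cong\Tor^R_1(X,N)$. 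The main technical obstacle is verifying the naturality assertion $\psi_i=\partial_{i+1}$: this is a compatibility between an approximation-based construction and a hull-based connecting map, which, while formally expected, requires careful tracking of the definitions and maps in the proof of Theorem \ref{thmintro}.
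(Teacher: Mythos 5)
Your strategy is genuinely different from the paper's, but it has a real gap that you yourself flag and then do not resolve, and I don't think it can be waved away.

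The maps $\psi_i$ and $\xi_i$ in the proof of Theorem \ref{thmintro} are \emph{not} induced by a natural transformation between the Tate, absolute, and relative Tor functors. The paper is explicit about this: unlike the classical case, there is no morphism of complexes from a complete $\mathcal{P}\mathcal{P}_C$-resolution to the projective resolution, so $\psi_i$ is manufactured ad hoc from a $\mathcal{G}_C$-\emph{approximation} $0\to Y_a\to X_a\to M\to 0$ (with $X_a$ totally $C$-reflexive, $\pcd_R(Y_a)<\infty$) as the composite $\phi_i\circ g_i\circ f_i^{-1}$, and $\xi_i$ is likewise built out of the approximation's Tor connecting maps. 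Your argument instead works with the $\mathcal{G}_C$-\emph{hull} $0\to M\to X\to Y\to 0$, uses Lemma \ref{l3} to identify $\tate_i^{\mathcal{G}_C}(M,N)\cong\Tor_{i+1}^R(Y,N)$, and then asserts that under this identification $\psi_i$ becomes the hull's Tor connecting map $\partial_{i+1}$ and that the square comparing $\xi_i$ with $\Tor_i(M,N)\to\Tor_i(X,N)$ via the natural map $\gmc\Tor_i(M,N)\to\gmc\Tor_i(X,N)\cong\Tor_i(X,N)$ commutes. Neither of these is a formality: the left-hand data are produced from the approximation and the right-hand data from the hull, and these are two different auxiliary short exact sequences. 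Absent a canonical comparison (for example, a proof that the maps in Theorem \ref{thmintro} are independent of the chosen approximation, together with a further argument relating approximation to hull), the commutativity of the ladder you feed to the five lemma is unproved, and the whole chase, including the boundary case $i=1$, rests on it.

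By contrast, the paper's proof sidesteps this compatibility question entirely. It takes a one-step $\mathcal{P}_C$-resolution $0\to X'\to C^{\oplus t}\to X\to 0$, forms the pullback $Z$ of $M\to X$ and $C^{\oplus t}\to X$, observes that $0\to X'\to Z\to M\to 0$ is $\gmc$-proper (Lemma \ref{ll1}), and that $Z$ is totally $C$-reflexive. Applying Lemma \ref{ll} to the proper sequence and $-\otimes_R N$ to the pullback diagram gives an explicit commutative ladder whose outer maps are the identity on $X'\otimes_R N$ and the injection $f\colon Z\otimes_R N\to C^{\oplus t}\otimes_R N$ (injectivity is exactly where $\Tor_1^R(Y,N)=0$ is used), from which $\gmc\Tor_1^R(M,N)\cong\Tor_1^R(X,N)$ falls out; the higher degrees follow from the dimension shifts $\gmc\Tor_{i+1}^R(M,N)\cong\Tor_i^R(X',N)\cong\Tor_{i+1}^R(X,N)$. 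Every map in that argument is explicitly constructed, so there is nothing left to verify by ``naturality.'' If you want to pursue your route, the honest content of the lemma would then be precisely the compatibility statement you flagged, and that would need its own proof; as written, the gap is genuine.
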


\begin{proof} Note that $Y$ is totally $C$-reflexive, and $\gkd_R(M)=\mathcal{P}_C$-$\pd_R(X)<\infty$ so that $X \in \bc$; see \ref{HD}(ii). It follows by Proposition \ref{l4} that $\tate_i^{\mathcal{G}_C}(X,N)=0$ for all $i\in\ZZ$. Therefore, the $\mathcal{G}_C$-hull of $M$ and Lemma \ref{l3} induce  $\tate_i^{\mathcal{G}_C}(M,N)\cong\tate_{i+1}^{\mathcal{G}_C}(Y,N)$ for all $i\in\ZZ$. Hence, by our assumption,
we have that $\Tor_1^R(Y,N) \cong \tate_1^{\mathcal{G}_C}(Y,N)=0$.

Note that a finite $\mathcal{P}_C$-resolution of $X$ yields an exact sequence of $R$-modules $0\rightarrow X'\rightarrow C^{\oplus t}\rightarrow X\rightarrow0$, where $\pcd_R(X')<\infty$; see \ref{PCR}(iv) and \ref{HD}(ii).

As $N\in \ac$, we have that $\Tor_i^R(C,N)=0$ for all $i\geq 1$. Hence, for all $i\geq 1$, we obtain:
\begin{equation}\tag{\ref{ll2}.1}
\Tor_{i}^R(X',N)\cong\Tor_{i+1}^R(X,N).
\end{equation}

Next, by taking the pull-back of the maps $M\to X$ and $C^{\oplus t}\rightarrow X$, we get the following commutative diagram with exact rows and columns:
$$\begin{CD}
&&&&&&&&\\
	\ \ &&&& 0&&0\\
	&&&& @VVV @VVV\\
	&&&& X' @>{=}>>X'\\
	&&&& @VVV @VVV\\
	\ \ &&0@>>> Z@>>>C^{\oplus t} @>>> Y@>>>0& \\
	&&&& @VVV @VVV @V{\parallel}VV\\
	\ \ &&0@>>> M @>>> X @>>> Y @>>>0&  \\
	&&&& @VVV  @VVV \\
	\ \ &&&& 0&&0\\\\
\end{CD}$$	
As $\pcd_R(X')<\infty$, the exact sequence $0\rightarrow X'\rightarrow Z \rightarrow M\rightarrow0$ is $\gmc$-proper; see \ref{PCR}(i) and Remark \ref{l}(i). Therefore, for each $i\geq 0$, Lemma \ref{ll} yields the following long exact sequence:
\begin{equation}\tag{\ref{ll2}.2}
\cdots\to\gmc\Tor_{i+1}^R(M,N)\to\gmc\Tor_{i}^R(X',N)\to\gmc\Tor_{i}^R(Z,N)\to\cdots
\end{equation}

Note that, $Z$ is totally $C$-reflexive since $Y$ is totally $C$-reflexive. Thus it follows that $\gmc\Tor_i^R(Z,N)=0$ for all $i\geq 1$. Therefore, (\ref{ll2}.2) yields the following isomorphism for all $i\geq 1$:
\begin{equation}\tag{\ref{ll2}.3}
\gmc\Tor_{i+1}^R(M,N)\cong\gmc\Tor_{i}^R(X',N).
\end{equation}
Note that, by Remark \ref{l}(iii), we have $\gmc\Tor_{i}^R(X',N)\cong \Tor_{i}^R(X',N)$ for all $i\geq 1$. Hence, for each $i\geq 1$, (\ref{ll2}.1) and (\ref{ll2}.3) yield:
\begin{equation}\tag{\ref{ll2}.4}
\gmc\Tor_{i+1}^R(M,N)\cong\gmc\Tor_{i}^R(X',N)\cong \Tor_{i}^R(X',N) \cong \Tor_{i+1}^R(X,N).
\end{equation}

Next we consider above pull-back diagram, along with (\ref{ll2}.2), and obtain the following commutative diagram with exact rows:
$$\begin{CD}
	&&&&&&&&\\
	\ \ &&0@>>>\gmc\Tor_1^R(M,N)@>>>X'\otimes_RN @>>> Z\otimes_RN& \\
	&&&& @V{g}VV @V{1}VV @V{f}VV\\
	\ \ &&0@>>>\Tor_1^R(X,N)@>>> X'\otimes_RN  @>>> C^{\oplus t} \otimes_RN& \\\\
	\end{CD}$$
Here the leftmost vertical map $g$ is induced by $1$ and $f$. Recall that $\Tor_1^R(Y,N)=0$. Hence, $f$ is injective. This shows that $g$ is an isomorphism, i.e.,
\begin{equation}\tag{\ref{ll2}.5}
\gmc\Tor_1^R(M,N)\cong\Tor_1^R(X,N).
\end{equation}
Consequently, the conclusion of the lemma follows from (\ref{ll2}.4) and (\ref{ll2}.5).
\end{proof}

The next theorem plays an important role in the proof of Theorem \ref{thmintro2}; it also generalizes Theorem \ref{audepth}, namely Auslander's depth formula, as well as the main result of \cite{CLS}.

\begin{thm}\label{tt} Let $M$ and $N$ be $R$-modules with $\gkd_R(M)<\infty$ and $N\in\ac$. Set $q=\sup\{i\mid\mathcal{G}_C\Tor_i^R(M,N)\neq0\}$. Assume $\tate_i^{\mathcal{G}_C}(M,N)=0$ for all $i\leq0$. If $q=0$ or $\depth_R(\mathcal{G}_C\Tor_q^R(M,N))\leq1$, then it follows that $$\depth_R(M)+\depth_R(N)=\depth R+\depth_R(\gmc\Tor_q^R(M,N))-q.$$
\end{thm}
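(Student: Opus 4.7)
My plan is to reduce Theorem \ref{tt} to the classical Auslander--Iyengar depth formula for modules of finite projective dimension via a $\mathcal{G}_C$-hull of $M$. I begin by taking such a hull $0 \to M \to X \to Y \to 0$ from Remark \ref{rm}, with $\pcd_R(X) < \infty$ and $Y$ totally $C$-reflexive. Since $\tate_0^{\mathcal{G}_C}(M,N)=0$ by hypothesis, Lemma \ref{ll2} yields $\mathcal{G}_C\Tor_i^R(M,N) \cong \Tor_i^R(X,N)$ for every $i \geq 1$. By \ref{AA}(ii) we have $\depth_R(Y)=\depth R$, so the depth lemma applied to the hull (together with $\depth_R(M)\leq\depth R$) forces $\depth_R(M)=\depth_R(X)$. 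Moreover, \ref{TPC}(i) and Lemma \ref{l} identify $\Tor_i^R(X,N) \cong \Tor_i^R(\Hom_R(C,X),N\otimes_R C)$ for all $i\geq 0$, while \ref{HD}(ii) gives $\pd_R(\Hom_R(C,X))=\pcd_R(X)<\infty$.

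Next, I invoke the classical Auslander--Iyengar depth formula for the pair $(\Hom_R(C,X),\,N\otimes_R C)$, whose first argument has finite projective dimension. The $\depth\leq 1$ or $q=0$ hypothesis in Theorem \ref{tt} transfers through the isomorphism of Lemma \ref{ll2}, and combining with $\depth_R(\Hom_R(C,X))=\depth_R(X)=\depth_R(M)$ and $\depth_R(N\otimes_R C)=\depth_R(N)$ (via \ref{ABC}(v)--(vi)) recovers the desired formula at once when $q\geq 1$. For $q=0$, the formula gives $\depth_R(M)+\depth_R(N)=\depth R+\depth_R(X\otimes_R N)$, and what remains is to prove $\depth_R(X\otimes_R N)=\depth_R(M\otimes_R N)$. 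Tensoring the hull with $N$ yields $0\to M\otimes N\to X\otimes N\to Y\otimes N\to 0$, where injectivity on the left follows from $\Tor_1^R(Y,N)=0$; this vanishing comes from applying Lemma \ref{l3} to the hull combined with $\tate_*^{\mathcal{G}_C}(X,-)=0$ from Lemma \ref{l4}, giving $\tate_1^{\mathcal{G}_C}(Y,N)\cong\tate_0^{\mathcal{G}_C}(M,N)=0$, together with the identification $\tate_1^{\mathcal{G}_C}(Y,N)\cong\Tor_1^R(Y,N)$ from \ref{TTS}.

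The main obstacle is the depth inequality $\depth_R(Y\otimes N)\geq\depth_R(N)$: given this bound together with $\depth_R(X\otimes N)\leq\depth_R(N)$ (which follows from the Auslander--Iyengar formula above and $\depth_R(X)\leq\depth R$), the depth lemma applied to $0\to M\otimes N\to X\otimes N\to Y\otimes N\to 0$ yields $\depth_R(M\otimes N)=\depth_R(X\otimes N)$, completing the proof. To establish the bound, I will construct a $\mathcal{P}_C$-coresolution $0\to Y\to C^{m_0}\to C^{m_1}\to\cdots$ by $(-)^{\triangledown}$-dualizing a free resolution of $Y^{\triangledown}$; exactness uses $\Ext_R^{\geq 1}(Y^{\triangledown},C)=0$ since $Y$ is totally $C$-reflexive, and the cosyzygies $Y_{-j}$ are themselves totally $C$-reflexive. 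Iterating Lemma \ref{l3} together with $\tate_*^{\mathcal{G}_C}(C^{m_k},-)=0$ (from Lemma \ref{l4}) produces $\tate_i^{\mathcal{G}_C}(Y_{-j},N)\cong\tate_{i-j-1}^{\mathcal{G}_C}(M,N)$, so the hypothesis $\tate_i^{\mathcal{G}_C}(M,N)=0$ for all $i\leq 0$ forces $\Tor_1^R(Y_{-j},N)\cong\tate_1^{\mathcal{G}_C}(Y_{-j},N)=0$ for every $j\geq 0$ via \ref{TTS}. Tensoring the coresolution with $N$ therefore remains exact and produces $0\to Y\otimes N\to (C\otimes N)^{m_0}\to (C\otimes N)^{m_1}\to\cdots$; since $\depth_R(C\otimes N)=\depth_R(N)$ by \ref{ABC}(vi), iterating the depth lemma through the first $\depth_R(N)$ of the resulting short exact sequences $0\to Y_{-j}\otimes N\to (C\otimes N)^{m_j}\to Y_{-(j+1)}\otimes N\to 0$ gives the desired inequality $\depth_R(Y\otimes N)\geq\depth_R(N)$.
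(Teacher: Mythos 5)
Your argument is sound in the cases $q\geq 1$ and ($q=0$ with $\gkd_R(M)\geq 1$), and in the latter your $\mathcal{P}_C$-coresolution trick to get $\depth_R(Y\otimes_RN)\geq\depth_R(N)$ is a genuine (and arguably cleaner) alternative to the induction on $\gkd_R(M)$ that the paper runs in its Case~2. But there is a real gap at the base case $\gkd_R(M)=0$, i.e.\ when $M$ is already totally $C$-reflexive. Note that this case forces $q=0$ and is not excluded by your hypotheses, and it breaks your argument in two places at once: (a) Lemma \ref{ll2} is stated only for $0<\gkd_R(M)<\infty$, so it cannot be invoked; and (b) the final depth-lemma step needs the \emph{strict} inequality $\depth_R(X\otimes_RN)<\depth_R(Y\otimes_RN)$, whereas your own bounds only give $\depth_R(X\otimes_RN)=\depth_R(N)-\pcd_R(X)\leq\depth_R(N)\leq\depth_R(Y\otimes_RN)$. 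When $\pcd_R(X)=\gkd_R(M)=0$ both quantities can equal $\depth_R(N)$, and then the depth lemma applied to $0\to M\otimes_RN\to X\otimes_RN\to Y\otimes_RN\to0$ gives only $\depth_R(M\otimes_RN)\geq\depth_R(X\otimes_RN)$, not the equality you claim. Applying your coresolution argument directly to $M$ also only yields $\depth_R(M\otimes_RN)\geq\depth_R(N)$; the upper bound is the missing ingredient.

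What you are missing is precisely what the paper's base case supplies and what makes the Tate-vanishing hypothesis earn its keep when $M$ is totally $C$-reflexive: Proposition \ref{lll} converts $\tate_0^{\mathcal{G}_C}(M,N)=\tate_{-1}^{\mathcal{G}_C}(M,N)=0$ into $\Ext^1_R(\trk M,N\otimes_RC)=\Ext^2_R(\trk M,N\otimes_RC)=0$, so that the four-term sequence in \ref{ABC}(vii) collapses to an isomorphism $M\otimes_RN\cong\Hom_R(M^{\triangledown},N\otimes_RC)$; then the vanishing of $\Ext^{\geq1}_R(M^{\triangledown},N\otimes_RC)$ together with \cite[4.1]{AY} gives $\depth_R(M\otimes_RN)=\depth_R(N\otimes_RC)=\depth_R(N)$ with equality, not just an inequality. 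So you should either treat $\gkd_R(M)=0$ as a separate base case via Proposition \ref{lll} (as the paper does), or find another way to produce the upper bound $\depth_R(M\otimes_RN)\leq\depth_R(N)$; as written, the argument is incomplete.
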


\begin{proof} Let $0 \to M\to X\to Y \to 0$ be a $\mathcal{G}_C$-hull of $M$; see Remark \ref{rm}(ii). Then $Y$ is totally $C$-reflexive and $\gkd_R(M)=\mathcal{P}_C$-$\pd_R(X)<\infty$ so that $X \in \bc$; see \ref{HD}(ii). Setting $A=\Hom_R(C,X)$ and $B=C\otimes_RN$, we have:
\begin{equation}\tag{\ref{tt}.1}
\depth_R(X)=\depth_R(A)  \text{ and } \depth_R(B)=\depth_R(N).
\end{equation}
The first and second equalities in (\ref{tt}.1) are due to \ref{ABC}(v) and \ref{ABC}(vi), respectively. Also, it follows from \ref{HD}(ii)  that $\pd_R(A)<\infty$. Next we use \ref{TPC}(i) and \ref{TPC}(ii), and record that the following isomorphims hold for all $i\geq 0$:
\begin{equation}\tag{\ref{tt}.2}
\Tor_i^R(A,B) = \Tor_i^R(\Hom_R(C,X),B) \cong \Tor_i^{\mathcal{P}_C}(X,N) \cong \Tor_i^R(X,N).
\end{equation}

We proceed by considering the cases where $q\neq 0$ and $q=0$, separately.

\emph{Case 1.} Assume $q\neq 0$ and $\depth_R(\mathcal{G}_C\Tor_q^R(M,N))\leq1$.\\
Note, as $q\neq 0$, it follows $\gkd_R(M)>0$. So, by Lemma \ref{ll2}, we have that $\gmc\Tor_i^R(M,N) \cong \Tor_i^R(X,N)$ for all $i\geq 1$. Thus (\ref{tt}.2) yields $\Tor_i^R(A,B) \cong \gmc\Tor_i^R(M,N)$ for all $i\geq 1$. This yields $q=\sup\{i\mid\Tor_i^R(A,B)\neq0\}$ and $\depth_R(\Tor_q^R(A,B))=\depth_R(\mathcal{G}_C\Tor_q^R(M,N))\leq 1$. Hence, as $\pd_R(A)<\infty$, we use Theorem \ref{audepth}, applied to the pair $(A,B)$, and conclude that:
\begin{equation}\tag{\ref{tt}.3}
\depth_R(A)+\depth_R(B)=\depth(R)+\depth_R(\Tor_q^R(A,B))-q.
\end{equation}
Now the required equality follows from (\ref{tt}.1) and (\ref{tt}.3).

\emph{Case 2.} Assume $q=0$, i.e, $\mathcal{G}_C\Tor_i^R(M,N)=0$ for all $i\geq 1$.\\
Set $n=\gkd_R(M)$ and proceed by induction on $n$. First assume $n=0$, i.e., $M$ is totally $C$-reflexive. Then it follows by our assumption and Lemma \ref{lll} that
$0=\tate_i^{\mathcal{G}_C}(M,N)\cong \Ext^{-i+1}_R(\trk M, B)$ for all $i\leq 0$. Hence, by \ref{ABC}(vii), we conclude that $M\otimes_RN\cong\Hom_R(M^\triangledown,B)$.

Notice, since $N\in\ac$, it follows that $\Ext^i_R(C,B)=0$ for all $i\geq 1$; see \ref{ABC}(i)(b). As we know the vanishing of $\Ext^{j}_R(\trk M,B)$ for all $j\geq 1$, we see from (\ref{at}.1) that $\Ext^i_R(M^\triangledown,B)=0$ for all $i\geq 1$. Hence \cite[4.1]{AY} yields $\depth_R(B)=\depth_R(\Hom_R(M^\triangledown,B))$. As $M\otimes_RN\cong\Hom_R(M^\triangledown,B)$, we obtain $\depth_R(B)=\depth_R(M\otimes_RN)$. Recall that $\depth_R(N)=\depth_R(B)$; see (\ref{tt}.1). So we conclude that $\depth_R(N)=\depth_R(M\otimes_RN)$. On the other hand, since $M$ is totally $C$-reflexive, we have that $\depth_R(M)=\depth(R)$; see \ref{AA}(ii). This establishes the equality $\depth_R(M)+\depth_R(N)=\depth(R)+\depth_R(M\otimes_RN)$, and completes the proof for the case where $n=0$.

Next assume $n\geq 1$. It follows from Proposition \ref{l4} that $\tate_i^{\mathcal{G}_C}(X,N)=0$ for all $i\in\ZZ$. Hence, for all $i\in\ZZ$, the exact sequence $0 \to M\to X\to Y \to 0$, i.e., the $\mathcal{G}_C$-hull of $M$, and Lemma \ref{l3} yield the isomorphisms:
\begin{equation}\tag{\ref{tt}.4}
\tate_i^{\mathcal{G}_C}(M,N) \cong \tate_{i+1}^{\mathcal{G}_C}(Y,N).
\end{equation}
So, our assumption on the vanishing of $\tate_i^{\mathcal{G}_C}(M,N)$ and (\ref{tt}.4) imply that $\tate_{j}^{\mathcal{G}_C}(Y,N)=0$ for all $j\leq 1$. As $Y$ is totally $C$-reflexive, it follows that $\depth_R(Y)=\depth(R)$ and $\mathcal{G}_C\Tor_i^R(Y,N)=0$ for all $i\geq 1$. Therefore, by the induction hypothesis, we have:
\begin{equation}\tag{\ref{tt}.5}
\depth_R(Y\otimes_RN)=\depth_R(N).
\end{equation}

Recall that, by (\ref{tt}.2), we have $\Tor_i^R(A,B) \cong \Tor_i^R(X,N)$ for all $i\geq 0$. This shows $A\otimes_R B \cong X\otimes_RN$. Note, since $n=\gkd_R(M)>0$ and $q=0$, Lemma \ref{ll2} implies that $\Tor_i^R(X,N)=0$ for all $i\geq 1$. Therefore, we see $\Tor_i^R(A,B)=0$ for all $i\geq 1$. Recall $\pd_R(A)<\infty$ and note $n=\pcd_R(X)=\pd_R(A)$; see \ref{HD}(ii). Thus, we use Theorem \ref{audepth} for the pair $(A,B)$ and obtain:
\begin{equation}\tag{\ref{tt}.6}
\depth_R(X\otimes_RN)=\depth_R(A\otimes_RB)=\depth_R(B)-n=\depth_R(N)-n < \depth_R(N).
\end{equation}

We have:
\begin{equation}\tag{\ref{tt}.7}
0=\tate_0^{\mathcal{G}_C}(M,N) \cong \tate_{1}^{\mathcal{G}_C}(Y,N) \cong \Tor_1^R(Y,N).
\end{equation}
In (\ref{tt}.7), the first equality is by the hypothesis, while the second and the third isomorphisms follow from (\ref{tt}.4) and \ref{TTS}, respectively.
So the short exact sequence $0 \to M\to X\to Y \to 0$ induces the following exact sequence:
\begin{equation}\tag{\ref{tt}.8}
0\longrightarrow M\otimes_RN\longrightarrow X\otimes_RN\longrightarrow Y\otimes_RN\longrightarrow0.
\end{equation}

It follows from (\ref{tt}.5) and (\ref{tt}.6) that $\depth_R(X\otimes_RN) < \depth_R(Y\otimes_RN)$. Hence the depth lemma applied to (\ref{tt}.8)
shows that $\depth_R(M\otimes_RN)=\depth_R(X\otimes_RN)$. So we conclude that $\depth_R(M\otimes_RN)=\depth_R(N)-n$; see (\ref{tt}.6). Recall, by \ref{AA}(ii), we have $n=\depth(R)-\depth_R(M)$. Therefore, we obtain the depth equality $\depth_R(M\otimes_RN)=\depth_R(N)-n=\depth_R(M)+\depth_R(N)-\depth(R)$, which completes the proof.
\end{proof}

It is known that, if $M$ and $N$ are $R$-modules such that $\G-dim_R(M)<\infty$ and $\id_R(N)<\infty$, then Tate Tor modules $\tate_i^{R}(M,N)$ vanish for all $i\in\ZZ$; see \cite[2.4(1)]{CJ}. In the following proposition we show that this vanishing result carries over to Tate Tor modules with respect to a semidualizing module $C$; see \ref{TTS}. Recall that, if $N$ is an $R$-module such that $\mathcal{I}_{C}$-$\id_R(N)<\infty$, then $N\in\mathcal{A}_C$; see \ref{HD}(iii).

\begin{prop}\label{llll}  Let $M$ and $N$ be $R$-modules such that $\gkd_R(M)<\infty$ and $\mathcal{I}_{C}$-$\id_R(N)<\infty$. Then it follows that $\tate_i^{\mathcal{G}_C}(M,N)=0$ for all $i\in\ZZ$. In particular, we have $\Tor_i^R (M,N)=0$ for all $i>\gkd_R(M)$.
\end{prop}

\begin{proof} Note that $N\in \ac$; see \ref{HD}(iii). As the second part of the claim follows from the construction of Tate Tor modules, we proceed and prove the vanishing of $\tate_i^{\mathcal{G}_C}(M,N)=0$ for all $i\in\ZZ$; see \ref{TTS}. 

Claim 1: If $F$ is a free $R$-module, then $\tate_j^{\mathcal{G}_C}(F,N)=0$ for all $j\in\ZZ$.\\
Proof of Claim 1:  Let $F$ be a free $R$-module. Then $\tate_j^{\mathcal{G}_C}(F,N) \cong \Tor_j^R(F,N)=0$ for all $j>\gkd_R(F)=0$; see \ref{TTS}. Moreover, $\Ext^{j}_R(\trk F,-)=0$ for all $j\geq 1$ since $\trk F =0$. Hence Claim 1 follows from Proposition \ref{lll}.

Claim 2: If $T$ is a totally $C$-reflexive $R$-module, then it follows that $\tate_i^{\mathcal{G}_C}(T,N)=0$ for all $i\leq 0$.\\
Proof of Claim 2: Let $T$ be a totally $C$-reflexive $R$-module. Then it follows that $\depth_R(\trk T)=\depth(R)$; see \ref{AA}(i). Note that $\id_R(N\otimes_RC)<\infty$; see \ref{HD}(iii). Therefore, we conclude from \cite[2.6]{FI} that $\Ext^{j}_R(\trk T,N\otimes_RC)=0$ for all $j\geq 1$. Now Proposition \ref{lll} shows that $\tate_i^{\mathcal{G}_C}(T,N)=0$ for all $i\leq 0$, i.e., the claim follows.

Next consider a $\mathcal{G}_C$-approximation of $M$, i.e., a short exact sequence $0\to Y\to X\to M \to 0$, where $X$ is a totally $C$-reflexive $R$-module and $\pcd_R(Y)<\infty$; see Remark \ref{rm}(i). As $N\in \ac$, it follows from Proposition \ref{l4} that $\tate_i^{\mathcal{G}_C}(Y,N)=0$ for all $i\in\ZZ$. Therefore,  Lemma \ref{l3} yields $\tate_i^{\mathcal{G}_C}(M,N)\cong\tate_i^{\mathcal{G}_C}(X,N)$ for all $i\in\ZZ$. So, in view of Claim 2, it is enough to prove the vanishing of $\tate_i^{\mathcal{G}_C}(X,N)$ for all $i\geq 1$.

Let $n$ be a positive integer and consider $\tate_n^{\mathcal{G}_C}(X,N)$. Note that the conclusion of Claim 1 allows us to use the dimension shifting and conclude that $\tate_n^{\mathcal{G}_C}(X,N)\cong \tate_0^{\mathcal{G}_C}(\Omega^{n} X,N)$, where $\Omega^{n} X$ is the $n$th syzygy of $X$. We know $X$ is totally $C$-reflexive; so $\Omega^{n} X$ is also totally $C$-reflexive; see \ref{AA}(ii). Hence the vanishing of $\tate_0^{\mathcal{G}_C}(\Omega^{n} X,N)$ follows from Claim 2. This argument implies that $\tate_i^{\mathcal{G}_C}(X,N)=0$ for all $i\geq 1$ and completes the proof.
\end{proof}

We are now ready to give a proof of Theorem \ref{thmintro2}:

\begin{proof}[Proof of Theorem \ref{thmintro2}] Note that, by Proposition \ref{llll}, we have $\tate_i^{\mathcal{G}_C}(M,N)=0$ for all $i\in\ZZ$. Note also that $N$ belongs to the Auslander class $\mathcal{A}_C$; see \ref{HD}(iii). Therefore, Theorem \ref{thmintro} implies that $\Tor_i^R(M,N) \cong \mathcal{G}_C\Tor^R_i(M,N)$ for all $i\geq 0$. Now the required conclusion follows from Theorem \ref{tt}.
\end{proof}

\section*{Appendix: A discussion on Auslander's depth formula  }\label{sec:appendix}
\noindent
In this section, we revisit Theorem \ref{audepth}, namely Auslander's the depth formula, discuss some of its applications, and highlight its relation with some open problems in the literature. We should note that the depth formula, especially for the case where $q=0$, i.e., for the case where the modules in question are Tor-independent, is an important tool in commutative homological algebra and has many applications in the literature; see, for example, \cite{CI, CJ}. Here we discuss only a few basic applications and consequences of the depth formula. The main purpose of such a discussion is to provide some motivation for our work from section 4 concerning the generalizations of the depth formula, explain why the depth formula is useful, and indicate why it is worth studying the conditions implying the depth formula to hold.  In the following, we record various remarks about the depth formula:
\begin{enumerate}[\rm(1)]
\item It is an open question whether or not the depth formula always holds for Tor-independent modules. More precisely, it is not known if the equality $\depth_R(M)+\depth_R(N)=\depth(R)+\depth_R(M\otimes_RN)$ holds whenever $M$ and $N$ are nonzero $R$-modules such that $\Tor_i^R(M,N)=0$ for all $\geq 1$.
\item If $M$ and $N$ are $R$-modules, then the depth equality $\depth_R(M)+\depth_R(N)=\depth(R)+\depth_R(M\otimes_RN)$ may fail, in general, even if $\Tor_i^R(M,N)=0$ for all $i\gg 0$. For example, if $R=k[\![x,y,z]\!]/(xz-y^2,xy-z^2)$, $M=R/(x,y)$ and $N=R/(x,z)$, then it follows that $\Tor_1^R(M,N)\neq 0$ and $\Tor_i^R(M,N)=0$ for all $i\geq 2$; see \cite[4.2]{J1}. Furthermore, one has  $\depth_R(M)+\depth_R(N)=0<1=\depth(R)+\depth_R(M\otimes_RN)$. On the other hand, the depth formula in the form $\depth_R(M)+\depth_R(N)=\depth R+\depth_R(\Tor_q^R(M,N))-q$ holds, where $q=\sup\{i\mid\Tor_i^R(M,N)\neq0\}=1$.
\item The depth formula is an extension of the classical Auslander-Buchsbaum formula. In fact, let $M$ be a nonzero $R$-module such that $\pd_R(M)<\infty$ and let $N=k$. Then it follows from Theorem \ref{audepth} that $\depth_R(M)+\depth_R(N)=\depth R+\depth_R(\Tor_q^R(M,N))-q$, where $q=\pd_R(M)$. This yields the classical Auslander-Buchsbaum formula, i.e., $\pd_R(M)+\depth_R(M)=\depth(R)$; see also Jorgensen's dependency formula  \cite{J2} for further applications of the depth formula in this direction.
\item The depth formula determines the projective dimension of the tensor product of two nonzero Tor-independent modules, when each of the module considered has finite projective dimension. More precisely, let $M$ and $N$ be nonzero $R$-modules such that $\pd_R(M)<\infty$ and $\pd_R(N)<\infty$.
If $\Tor_i^R(M,N)=0$ for all $i\geq 1$, then it follows that $\pd_R(M\otimes_RN)<\infty$, and hence the depth formula yields
$$(\depth(R)-\pd_R(M))+(\depth(R)-\pd_R(N))=\depth(R)+(\depth(R)-\pd_R(M\otimes_RN)),$$
which recovers the well-known equality $\pd_R(M\otimes_RN)=\pd_R(M)+\pd_R(N)$.
\item If $M$ and $N$ are nonzero $R$-modules such that $M\otimes_RN$ is an $n$th syzygy module, then the depth formula can be used to prove that $M$ or $N$ are $n$th syzygy modules. For example, Huneke and Wiegand \cite[2.6]{HW} used the depth formula and proved that, if $R$ is a complete intersection domain and $M\otimes_RN$ is an $n$th syzygy module, then both $M$ and $N$ are $n$th sygzygy modules; see also \cite{CeSa} for several applications of the depth formula on the depth of tensor products of modules over complete intersection rings.
\item The depth formula is useful in studying the Cohen-Macaulayness of tensor products. For example, it can be used to prove the following result of Kawasaki \cite[3.3(i)]{Kawa}: if $R$ is a Cohen-Macaulay local ring with canonical module $\omega$ and $M$ is a nonzero $R$-module such that $\pd_R(M)<\infty$, then $M\otimes_{R}\omega$ is Cohen-Macaulay if and only if $M$ is Cohen-Macaulay. In fact, if $N$ is a faithful maximal Cohen-Macaulay $R$-module (e.g., $N=\omega$) and $M$ is a nonzero $R$-module such that $\pd_R(M)<\infty$, then $M$ and $N$ are Tor-independent \cite[2.2]{Yos} so that the depth formula yields $\depth_R(M)=\depth_R(M\otimes_RN)\leq \dim_R(M\otimes_RN)=\dim_R(M)$, which shows $M$ is Cohen-Macaulay if and only if $M\otimes_RN$ is Cohen-Macaulay; see \cite{KS} for further applications of the depth formula in this direction.
\item It is an open question whether or not the vanishing of all higher homology modules imply finite homological dimension. More precisely, if $M$ is an $R$-module such that $\Tor_i^R(M,M)=0$ for all $i \geq 1$, then it is not known whether or not $M$ must have finite projective dimension, in general; see, for example, \cite[the paragraph preceeding 2.6]{Sega}. It is also not known if this question has an affirmative answer in case the depth formula for Tor-independent $R$-modules always holds. It seems worth pointing out that easy applications of the depth formula give affirmative answers for this Tor-vanishing problem in some special, albeit important, cases.

For example, if $R$ is a one-dimensional domain over which the depth formula holds for all Tor-independent $R$-modules, and $I$ is an ideal of $R$ such that $\Tor_i^R(I,I)=0$ for all $i \geq 1$, then the depth formula yields the equality $\depth_R(I)+\depth_R(I)=\depth(R)+\depth_R(I\otimes_RI)$, which implies that $I\otimes_RI$ is torsion-free. This shows that $I$ is a free $R$-module; see, for example, \cite[the paragraph preceding 4.4]{HW}.

Another application of the depth formula, which is related to the aforementioned Tor-vanishing problem is that, if $R=S/(x)$, where $S$ is a reduced Cohen-Macaulay ring and $(x,y)$ is a pair of exact zero-divisors such that $yM=0$, then the vanishing of $\Tor_i^R(M,M)$ for all $i\geq 1$ implies that $M$ has infinite complete intersection dimension, and hence infinite projective dimension; see \cite[2.10]{BCJ}.

In view of the foregoing discussion, it seems reasonable to raise the following question: if $M$ is an $R$-module such that $\Tor_i^R(M,M)=0$ for all $i \geq 1$, and the depth formula holds for all Tor-independent $R$-modules, then must $\pd_R(M)<\infty$? This problem can also be considered for different types of Tors, including Tate homology with respect to a semidualizing module $C$, or $\gmc$-relative homology; see \ref{RTS} and \ref{TTS}.

\item A long standing conjecture of Huneke and Wiegand \cite[page 473]{HW} claims that, if $R$ is a one-dimensional domain and $M$ is an $R$-module such that $M\otimes_RM^{\ast}$ is a nonzero and torsion-free, where $M^{\ast}=\Hom_R(M,R)$, then $M$ is free. This conjecture is wide open, even for two-generated ideals over complete intersection rings. It seems worth noting that an easy application of the depth formula give an affirmative answer
to the conjecture when the module considered has finite projective dimension: if $\pd_R(M)<\infty$, then $\Tor_i^R(M,M^{\ast})=0$ for all $i \geq 1$ \cite[2.2]{Yos} so that the depth formula shows that $M$ is torsion-free; consequently, $M$ is free by the Auslander-Buchsbaum formula. In fact, by using the depth formula, one can easily see that, if $R$ is a one-dimensional ring, and $M$ and $N$ are $R$-modules such that $M\otimes_RN$ is torsion-free and $\pd_R(M)<\infty$, then $M$ is free.
\item There are several conjectures from the representation theory of finite-dimensional algebras that have been transplanted to commutative algebra. An example is the celebrated {\it Auslander-Reiten Conjecture}, which states that a finitely generated module $M$ over a finite-dimensional algebra $A$ satisfying $\Ext^{i}_{A}(M, M)=\Ext^{i}_{A}(M, A)=0$ for all $i\geq 1$ must be projective. This long-standing conjecture of Auslander and Reiten is closely related to other important conjectures such as the Finitistic Dimension Conjecture from representation theory.

The Auslander-Reiten conjecture, over Gorenstein rings, can be stated as follows: if $M$ is a maximal Cohen-Macaulay $R$-module such that $\Ext^{i}_{R}(M, M)=0$ for all $i\geq 1$, then $M$ must be free.
If, in addition, the depth formula holds for Tor-independent modules over the Gorenstein ring considered, then the conjecture is equivalent to the following claim: if $M$ or $M\otimes_RM^{\ast}$ is a maximal Cohen-Macaulay $R$-module such that $\Tor_i^R(M,M^{\ast})=0$ for all $i\geq 1$, then $M$ must be free; see, for example, \cite[2.7]{J3}. Thus, the depth formula, when holds, gives a new perspective on the celebrated conjecture of Auslander and Reiten, and is one of the tools that allows one to consider the conjecture in terms of the vanishing of homology and the depth of the tensor product of modules.
\end{enumerate}

\section*{Acknowledgements}
\noindent
We thank the anonymous referees for many useful comments that helped improve the exposition.

\end{document}